\newtheorem{theorem}{\sc \textbf{Theorem}}[section]  
\newtheorem{proposition}[theorem]{\sc \textbf{Proposition}}   
\newtheorem{corollary}[theorem]{\sc \textbf{Corollary}}        
\newtheorem{lemma}[theorem]{\sc \textbf{Lemma}}
\theoremstyle{remark}
\newtheorem{definition}[theorem]{\sc \textbf{Definition}}
\newtheorem{remark}[theorem]{\sc \textbf{Remark}}
\def\cP{{\mathcal P}}
\def\cU{{\mathcal U}}
\def\cV{{\mathcal V}}
\numberwithin{equation}{section}
\def\namedlabel#1#2{\begingroup
    #2%
    \def\@currentlabel{#2}%
    \phantomsection\label{#1}\endgroup
}
\title[Harmonic Bergman spaces on locally finite trees]{Harmonic Bergman spaces on locally finite trees}
\author[A. Ottazzi]{Alessandro Ottazzi \orcidlink{0000-0002-4692-2751}}
\address[A. Ottazzi]{School of Mathematics and Statistics \\  University of New South
Wales \\ 2052 Sydney  \\  Australia}
\email{a.ottazzi@unsw.edu.au}
\author[F. Santagati]{Federico Santagati \orcidlink{0000-0003-4685-9956}}
\address[F. Santagati]{Dipartimento di Matematica and MaLGa center, \\ Università di Genova, Via Dodecaneso 35, Genova, Italy}
\email{federico.santagati@edu.unige.it}
\thanks{The authors were both supported by the Australian Research Council, grant DP220100285. Santagati was partially supported by the INdAM - GNAMPA Project ``$L^p$ estimates for singular integrals in nondoubling settings'' (CUP E53C23001670001).
}
\keywords{Bergman projection, locally finite trees, nondoubling measures}
\subjclass[2020]{05C05, 05C21,31C05, 43A99}
\begin{document} 
\begin{abstract}
We define the harmonic Bergman space on locally finite trees with respect to a suitable probabilistic Laplacian and a class of weighted flow measures. We characterise the corresponding Bergman projection and prove that it is bounded on $L^p$ for every $p>1$, and of weak type $(1,1)$. We also prove necessary and sufficient conditions for the $L^p$-boundedness of the extension of a class of Toeplitz-type operators.
\end{abstract}
\maketitle
\section{Introduction} 

 Bergman spaces, i.e., spaces of holomorphic functions in $L^2$, have been studied in several geometric settings by different authors, from the classical unit disk setup to domains in $\mathbb C^n$ with boundaries of different complexities. The study has also been extended outside the realm of complex variable functions, where authors substitute holomorphic functions with harmonic functions with respect to a suitable Laplace operator. 
We briefly recall some classical results.

Let $D$ be the unit disk in $\mathbb{C}$, and let $\mu_\alpha$ be the absolutely continuous measure with respect to the Lebesgue measure in $D$ with density $(1 - |z|^2)^{\alpha-2}$ for $\alpha > 1$.  
Denote by $B^2(D, \mu_\alpha)$ the space of holomorphic functions in $L^2(D, \mu_\alpha)$. The Bergman projection $\cP_\alpha$, i.e., the orthogonal projection from $L^2(D, \mu_\alpha)$ onto $B^2(D, \mu_\alpha)$, is defined by
an integral operator, whose kernel is 
 the reproducing kernel of $B^2(D, \mu_\alpha)$.
 
A classical result states that the operator $\cP_\alpha$ is  bounded on $L^p(D, \mu_\alpha)$ for every $p \in (1,\infty)$;  we refer to \cite[Chapter 4]{Zhu} and the references therein for a more detailed discussion about $\cP_\alpha$.

The analysis on the disk can be naturally extended to the upper half-plane $U$, where the measure $\mu_\alpha$ is replaced by the absolutely continuous measure with respect to the Lebesgue measure on $U$, whose density is $\mathrm{Im} (z)^{\alpha-2}$, provided that $\alpha > 1$.

For harmonic Bergman functions with respect to the Lebesgue measure, we refer to \cite{Axler-Bourdon-Ramey} for the definition of the Bergman projection in the cases of the ball and the half-space,  to \cite{Ramey-Yi-TAMS} for the classical $L^p$-boundedness property on the half-space setting and to \cite{U,U2} where related results are extended to a wider class of measures. 

More recently, building on the connection between the disk and homogeneous trees \cite{CC, RT}, several works have extended the study of the Bergman projection to harmonic functions in the discrete setting; see \cite{CCPS,DMV,DMR}. In particular, in \cite{DMR} and \cite{DMV}, the authors study the $L^p$-boundedness of the Bergman projection on the homogeneous tree $T_{q+1}$, i.e., a tree where every vertex has exactly $q+1$ neighbours, where the harmonic functions are defined with respect to the \textit{combinatorial} Laplacian. In this context, the measures $(1 - |z|^2)^{\alpha-2} \, \mathrm{d}z$ and $\mathrm{Im} (z)^{\alpha-2} \, \mathrm{d}z$ are replaced by $q^{-\alpha |\cdot|}$ on the disk realization of $T_{q+1}$ and by $q^{\alpha \ell(\cdot)}$ on the upper half-plane realization of $T_{q+1}$, provided that $\alpha > 1$. Here, $|\cdot|$ denotes the graph distance from the origin, and $\ell(\cdot)$ is the \textit{level} of a vertex (see Section \ref{notation} for a precise definition). 
The study of this harmonic Bergman projection was recently generalised to the case of {\it radial} trees in \cite{CDMRV}.

It is worth emphasising that, unlike isotropic settings where no direction is privileged, the choice of a root on a tree naturally induces an intrinsic orientation, namely from the root toward the boundary. This directionality can be exploited by defining operators that are not isotropic but instead aligned with the geometry of the tree; such an approach often leads to improved analytical behavior. For instance, it is known that the centred Hardy--Littlewood maximal operator on a tree with bounded geometry may fail to be bounded on any $L^p$ space for $p<\infty$ \cite{LMSV}. However, if one restricts the averaging process to the portion of a ball lying \textit{below} its center, the resulting non-centered maximal operator becomes bounded on every $L^p$ space with $p > 1$, even on locally finite trees \cite{MS}. Similarly, while the combinatorial Laplacian, which is associated with an isotropic random walk, allows for the study of Carleson measures only under radial symmetry assumptions on the tree \cite{CCS2, CCS}, the use of a directional operator, such as the \textit{forward Laplacian}, defined as the identity minus an average over the {\it successors} of a vertex, captures the underlying geometry and enables a characterisation of Carleson measures on arbitrary locally finite trees \cite{OS}. 
This observation motivates the present work, where we study the harmonic Bergman projection associated to the forward Laplacian on locally finite trees, without imposing additional geometric assumptions. We briefly outline the structure of the article and the main results.

In Section \ref{Sec:2}, we find an explicit expression of the associated harmonic Bergman kernel on $L^2$ equipped with measures that are pointwise products of \textit{flow measures} and weights on $\mathbb{Z}$ satisfying suitable decay properties.
Notably, on the homogeneous tree $T_{q+1}$, the function $q^{\ell(\cdot)}$ represents the simplest example of a flow measure and is closely related to the measure $q^{\alpha\ell(\cdot)}$ considered in \cite{DMR}. The exponent $\alpha>1$ appearing in the measure $q^{\alpha \ell(\cdot)}$  can be interpreted as the product of $q^{\ell(\cdot)}$ and the weight on $\mathbb{Z}$ defined by $n \mapsto q^{n(\alpha-1)}$. The flow measure $q^{\ell(\cdot)}$ on $T_{q+1}$ can be regarded as the discrete counterpart of the measure $\mathrm{Im}(z)^{-1} \, \mathrm{d}z$ on the upper half-plane $U$, as implicitly noted in \cite{HS}. This parallel suggests that a flow measure alone is not enough to define a non-trivial Bergman space and an additional decay property of the measure as a vertex approaches the boundary is required to study the harmonic Bergman projection; indeed, we show that the Bergman space associated to a flow measure is trivial (see Proposition \ref{prop:harmonic}). 
We emphasise that the class of measures we consider includes, for instance, the radial measures studied in \cite{CDMRV}. Our main result, Theorem \ref{mainT}, establishes weak type (1,1) and $L^p$ boundedness of the Bergman projection associated to these measures for every $p \in (1,\infty)$. 
Our proof relies on the Calderón--Zygmund theory for nondoubling metric measure spaces, which is based on a dyadic approach, originally developed in \cite{LMP,CAP} (see also \cite{Tol}), and later extended to the discrete setting in \cite{CDMRV}.

Finally, in Section \ref{Sec:3},  we focus on the particular case where the flow measure is locally doubling and the weight we consider is an exponential function on $\mathbb Z$. In this case, we obtain sharp pointwise estimates for the Bergman kernel, and we prove necessary and sufficient conditions for the $L^p$-boundedness of the extension of a class of Toeplitz-type operators inspired by the operators considered in \cite{Zhu}.
If the tree is homogeneous and is endowed with the canonical flow measure, then the necessary and sufficient conditions coincide (Corollary~\ref{equivalence-Toeplitz}).

\subsection{Notation and preliminaries}\label{notation}
Throughout the paper, $C$ will denote a positive constant that may vary from line to line, independent of any involved variable, though it may depend on fixed parameters. If there exists a constant $C>0$ such that the positive quantities $A$ and $B$ satisfy $A/C \leq B \leq C A$, we write $A \approx B$.

Let $T$ be a tree and fix a reference vertex $o \in T$. We denote by $d$ the standard graph distance on $T$. We fix a root $\omega \in \partial T$, where $\partial T$ is the boundary of $T$, defined as the set of equivalence classes of infinite geodesics starting at $o$, where two geodesics are equivalent if they eventually coincide.
 We notice that the choice of a root in the boundary of $T$ induces a level function $\ell$ on $T$, which is the analogue of the Busemann function on a Riemannian manifold, and it is defined as 
\begin{align*}
    \ell(x):=\lim_{n \to \infty} n-d(x,x_n) \qquad \forall x \in T,
\end{align*} where $\{x_n\}_{n \in \mathbb N}$ is an enumeration of the ray $[o,\omega)$.  For every vertex $x \in T$ we set $p(x)$ as the unique neighbour of $x$  such that $\ell(p(x))=\ell(x)+1$. Similarly, we denote by $s(x)$ the set of neighbours of $x$ minus $p(x)$. Moreover, we inductively define  $$p^n(x)=p(p^{n-1}(x)) \qquad \forall n \ge 1, \forall x \in T$$ where we agree that $p^0(x):=x.$ Similarly, we set $$s_n(x)=\cup_{y \in s_{n-1}(x)} s(y) \qquad \forall n \ge 1, \forall x \in T,$$ where $s_0(x):=\{x\}.$ We assume that every vertex has at least one successor, so that $s(x) \ne \emptyset$ for every $x \in T$.
We say that a vertex $y$ lies above $x$ (or, equivalently, $x$ lies below $y$), if $y=p^n(x)$ for some $n \ge 0$. For every $x \in T$ we denote by $T_x$ the sector rooted at $x$, i.e., the set of vertices in $T$ lying below $x$. Precisely, 
$$T_x:= \bigcup_{n=0}^\infty s_n(x).$$
Similarly, we define the sector minus its vertex as $$\dot{T}_x:=\bigcup_{n=1}^{\infty} s_n(x)=T_x \setminus \{x\}.$$
For every pair of vertices $x$ and $y$ in $T$ we define their confluent $x\wedge y$ as the vertex of minimum level which lies above $x$ and $y$.
Given any measure $\mu$ on $T$ we define $L^p(\mu)$ as the space of $p$-summable functions on $T$ with respect to the measure $\mu$ and we denote by $\|\cdot\|_{L^p(\mu)}$ the associated usual $p$-norm. If $E \subset T$ we sometimes write $\|\cdot\|_{L^p(E,\mu)}$ to denote the $p$-norm on the measure space $(E,m)$.

 We define a flow measure as a positive function on $T$ satisfying the mass-preserving condition 
\begin{align}
   m(x)=\sum_{y \in s(x)}m(y) \qquad \forall x \in T.
\end{align}
Given a flow measure $m$, we set $w_{xy}:=m(y)/m(x)$ and $w_{xp(x)}:=0$ for every vertex $x \in T$ and $y \in s(x)$, and observe that $\sum_{y : d(x,y)=1} w_{xy}=1$. We associate to these weights a  Laplace operator $\Delta$, which we call weighted {\it forward} Laplacian, defined on a function $f$ by 
\begin{align}\label{def:Delta}
    \Delta f(x)=f(x)-\sum_{y:  d(x,y)=1} f(y) w_{xy}= f(x)-\sum_{y \in s(x)} f(y)\frac{m(y)}{m(x)}\qquad \forall x \in T.
\end{align}By \eqref{def:Delta}, it is clear that $\Delta$ is a {\it probabilistic} Laplacian in the sense that it is defined as the identity minus a weighted average on the set of neighbours of a vertex. As usual, a function $f$ on $T$ is said to be harmonic if $\Delta f=0$ on $T$.
In \cite{OS} it is shown that the Poisson integral associated with $\Delta$ can be used to characterise Carleson measures on locally finite trees.

%Observe that if $f$ is harmonic on $T$ then, obviously, the restriction of $f$ to $T_x$ is harmonic on $T_x$ for every $x \in T$, i.e, \begin{align}\label{restriction}
 %   f(y)= \frac{1}{m(y)}\sum_{z \in s(y)}f(x)m(z) \qquad \forall y \in T_x.
%\end{align}

%Define $\Delta_m=I-P_m$ where $P_{m}f(x)=\sum_{y \in s(x)} f(y) \frac{m_\nu(y)}{m_\nu(x)}.$ For simplicity set $m_\nu=m$. Let $g$ be a harmonic function on $\ell(x) \ge n$. We aim to extend $g$ to a harmonic function on $T$ radial on $T_y$ for every $y \in \ell_{n-1}$. Set 
%\begin{align*}
 %   g^H_{n}(x)=\begin{cases} g(x) &\ell(x) \ge n, \\ 
 %   g(y) &x \in T_y, \ell(y)=n-1,
  %  \end{cases}
%\end{align*} where $g$ on $\ell_{n-1}$ is such that $\sum_{y \in s(u)} g(y) \frac{m(y)}{m(u)}=g(u)$ for every $u \in \ell_n$. For every $u \in T$ define $$W_u=\bigg\{f: s(u) \to \mathbb C, \sum_{y \in s(u)} f(y) m(y)=0\bigg\},$$ and we set $$\langle f, g \rangle_{W_u}=\sum_{x \in s(u)} f(x) g(x) m(x).$$ 

\section{Bergman projection}\label{Sec:2} In this section, we provide an explicit formula for the harmonic Bergman kernel and then study the $L^p$ 
  boundedness properties of the associated Bergman projection on suitable weighted $L^p$ spaces that we now introduce.

Given a flow measure $m$ and a function $\sigma$ on $T$, we denote by $m \cdot \sigma$ the pointwise product of $m$ and $\sigma$. We will assume the following: \\ 
 \textbf{Assumption \label{assumption:i}i)}: $\sigma$ is a positive function on $T$ only depending on the level of a vertex, i.e.,  $$\sigma(x)=\sigma_0(\ell(x)),$$ for some $\sigma_0: \mathbb Z \to \mathbb R_+$. With a slight abuse of notation, we shall write $\sigma$ instead of $\sigma_0$ in the sequel. We also require that for every (or equivalently for one) integer $k \in \mathbb Z$  \begin{align*}
       \sum_{n \le k} \sigma(n) <\infty.
    \end{align*}  
   \textbf{Assumption \label{assumption:ii}ii)}: $\sigma(n)m(p^n(o))$ grows fast as $n \to \infty$, namely \begin{align}\label{assumptionii1}\sum_{n=0}^\infty \frac{1}{\sigma(n)m(p^n(o))}<\infty.\end{align} 

    Observe that \eqref{assumptionii1} implies that $L^2(m\cdot \sigma)$ satisfies the following vanishing property: \begin{align}\label{limite}\lim_{n \to \infty} f(p^n(o))=0 \qquad \forall f \in L^2(m \cdot \sigma). \end{align}
    Indeed,  since $\sigma(n)m(p^n(o))$ diverges as $n \to \infty$, there exists $N \in \mathbb N$ and $c>0$ such that  \begin{align}\label{assumtionii2}m(p^n(o))\sigma(n)>c \qquad \forall n>N. \end{align} 

%    Observe that $\eqref{assumtionii2}$ guarantees that $f \in L^2(m\cdot \sigma)$ satisfies the following vanishing property: 
 %   \begin{align}\label{limite}
  %      \lim_{n \to \infty} f(p^n(x))=0 \qquad \forall x\in T.
  %  \end{align}F

\begin{remark}\label{rem:carl}
    Notice that $$m\cdot \sigma(T_x)= \sum_{y \in T_x} m(y) \sigma(y)=\sum_{n=0}^\infty \sum_{y \in s_n(x)}m(y) \sigma(y)= m(x) \sum_{n \le \ell(x)} \sigma(n) \qquad \forall x \in T.$$ This implies that $m \cdot \sigma$ is a {\it Carleson} measure in the sense of \cite[Definition 2.6]{OS} if  there exists $c>0$ such that  $$m\cdot \sigma (T_x)\le c m(x) \qquad \forall x \in T$$ that is equivalent to \begin{align}\label{prop:carl}\sum_{n \le k} \sigma(n) \le c \qquad \forall k \in \mathbb Z,\end{align} namely, $\sigma \in \ell^1(\mathbb Z)$. 
    %\st{It is not hard to see that \eqref{prop:carl} holds if and only if $\sigma(n) \approx a^n$ for some $a>1$.}
\end{remark}  

\begin{definition}
We define the Bergman space $B^2(m\cdot\sigma)$ as the space of $L^2(m\cdot \sigma)$ harmonic functions equipped with the $L^2(m\cdot \sigma)$ inner product.
\end{definition}

We recall the following property of harmonic functions; see \cite[Eq. (2.1)]{OS} for a complete proof of this fact. 
\begin{lemma}\label{lem:harm}
    Let $f$ be a harmonic function. Then, 
    \begin{align*}
        \sum_{y \in s_n(x)} f(y) m(y)= f(x)m(x) \qquad \forall n \in \mathbb N, \forall x \in T.
    \end{align*}
\end{lemma}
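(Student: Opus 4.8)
The plan is to proceed by induction on $n$, with the harmonicity condition $\Delta f = 0$ furnishing the base case. For $n = 0$ the statement is trivial, since $s_0(x) = \{x\}$ and the sum reduces to $f(x)m(x)$. For $n = 1$, I would simply rewrite the equation $\Delta f(x) = 0$: by \eqref{def:Delta} this says $f(x) = \sum_{y \in s(x)} f(y)\,m(y)/m(x)$, and multiplying through by $m(x)$ gives $\sum_{y \in s(x)} f(y)m(y) = f(x)m(x)$, which is precisely the claim for $n = 1$.

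For the inductive step, assume the identity holds for some $n \ge 1$ and all $x \in T$. The key combinatorial fact is that $s_{n+1}(x) = \bigcup_{z \in s_n(x)} s(z)$, and that this is a \emph{disjoint} union: since $T$ is a tree, distinct vertices $z, z' \in s_n(x)$ have disjoint successor sets (any common successor would create a cycle). Hence one may split the sum over $s_{n+1}(x)$ as an iterated sum
\begin{align*}
\sum_{y \in s_{n+1}(x)} f(y) m(y) = \sum_{z \in s_n(x)} \Bigl( \sum_{y \in s(z)} f(y) m(y) \Bigr).
\end{align*}
Applying the $n = 1$ case (harmonicity at $z$) to the inner sum replaces it by $f(z)m(z)$, and then the inductive hypothesis applied to the resulting sum over $s_n(x)$ yields $f(x)m(x)$, completing the induction.

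There is no real obstacle here; the only point requiring a word of care is the disjointness of the union $s_{n+1}(x) = \bigsqcup_{z \in s_n(x)} s(z)$, which is what makes the interchange into an iterated sum legitimate, and this is immediate from the tree structure and the definition of $s(\cdot)$ as neighbours excluding the predecessor $p(\cdot)$. (Absolute convergence is not an issue since the statement is understood for harmonic $f$ and the sums are over the finite sets $s_n(x)$, each vertex of a locally finite tree having finitely many successors, so each $s_n(x)$ is finite.)
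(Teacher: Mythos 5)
Your proof is correct and is precisely the standard induction on $n$ that the paper itself defers to (the paper gives no proof, citing Eq.\ (2.1) of \cite{OS} instead): the base case is the harmonicity relation $\sum_{y\in s(x)}f(y)m(y)=f(x)m(x)$, and the inductive step uses the disjoint decomposition $s_{n+1}(x)=\bigcup_{z\in s_n(x)}s(z)$, which you rightly justify via the uniqueness of the predecessor $p(\cdot)$. Nothing is missing; the finiteness remark about $s_n(x)$ on a locally finite tree is the correct (if minor) point to flag.
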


We note that the above lemma implies that there are no nontrivial harmonic functions in $L^p(m)$. In fact more is true.

\begin{proposition}\label{prop:harmonic} Assume that $\sum_{n \le 0} \tilde{\sigma}(n)=\infty$ for some positive weight $\tilde{\sigma}$ on $\mathbb Z$.
    Let $f$ be harmonic on $T$ and suppose that $f \in L^p(m\cdot \tilde{\sigma})$ for some $p \in [1,\infty)$. Then, $f=0$ on $T$.
\end{proposition}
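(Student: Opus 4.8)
The plan is to fix a vertex $x \in T$ and show that $f(x) = 0$ by exploiting the rigidity provided by Lemma \ref{lem:harm} together with the divergence hypothesis $\sum_{n \le 0} \tilde{\sigma}(n) = \infty$. The key observation is that Lemma \ref{lem:harm} lets us write, for each $n \ge 0$,
\begin{align*}
f(x) m(x) = \sum_{y \in s_n(x)} f(y) m(y),
\end{align*}
so the value $f(x)$ is an $m$-average (suitably normalised) of the values of $f$ deep inside the sector $T_x$. First I would apply Hölder's (or Jensen's) inequality with exponents $p$ and $p'$ to this identity against the measure $m$ restricted to $s_n(x)$: since $\sum_{y \in s_n(x)} m(y) = m(x)$ by the flow/mass-preserving property, we get
\begin{align*}
|f(x)| m(x) \le \Bigl( \sum_{y \in s_n(x)} |f(y)|^p m(y) \Bigr)^{1/p} \Bigl( \sum_{y \in s_n(x)} m(y) \Bigr)^{1/p'} = \Bigl( \sum_{y \in s_n(x)} |f(y)|^p m(y) \Bigr)^{1/p} m(x)^{1/p'},
\end{align*}
hence $|f(x)|^p m(x) \le \sum_{y \in s_n(x)} |f(y)|^p m(y)$. (For $p=1$ this is just the triangle inequality applied to the same identity, so the two cases merge.)

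Next I would multiply this inequality by $\tilde{\sigma}(\ell(x) - n)$, noting that every $y \in s_n(x)$ has level $\ell(y) = \ell(x) - n$, so $\sum_{y \in s_n(x)} |f(y)|^p m(y) \tilde{\sigma}(\ell(x)-n) = \sum_{y \in s_n(x)} |f(y)|^p (m \cdot \tilde{\sigma})(y)$, and then sum over $n \ge 0$. On the left we obtain $|f(x)|^p m(x) \sum_{n \ge 0} \tilde{\sigma}(\ell(x) - n) = |f(x)|^p m(x) \sum_{k \le \ell(x)} \tilde{\sigma}(k)$, which diverges by hypothesis (the tail $\sum_{n \le 0}\tilde\sigma(n) = \infty$ forces $\sum_{k \le \ell(x)} \tilde\sigma(k) = \infty$ for every integer $\ell(x)$, since they differ by finitely many finite terms). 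On the right we obtain
\begin{align*}
\sum_{n=0}^\infty \sum_{y \in s_n(x)} |f(y)|^p (m\cdot\tilde\sigma)(y) = \sum_{y \in T_x} |f(y)|^p (m\cdot\tilde\sigma)(y) \le \|f\|_{L^p(m\cdot\tilde\sigma)}^p < \infty,
\end{align*}
using the disjoint decomposition $T_x = \bigsqcup_{n\ge 0} s_n(x)$. A finite quantity bounding a divergent one times $|f(x)|^p m(x)$ forces $|f(x)|^p m(x) = 0$; since $m(x) > 0$, we conclude $f(x) = 0$, and as $x$ was arbitrary, $f \equiv 0$ on $T$.

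I do not expect a serious obstacle here: the only points requiring a little care are (i) justifying the interchange of the two sums on the right-hand side, which is immediate since all terms are nonnegative (Tonelli), and (ii) handling the $p=1$ endpoint, which as noted above follows directly from the triangle inequality rather than Hölder, so no separate argument is really needed. The conceptual heart is simply that harmonicity propagates the pointwise value of $f$ at $x$ as an average over arbitrarily deep shells of $T_x$, each carrying total $m$-mass $m(x)$, so an infinite amount of $\tilde\sigma$-weight accumulated along the levels below $\ell(x)$ is incompatible with $f$ being $p$-integrable unless $f$ vanishes at $x$.
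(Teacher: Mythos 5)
Your proof is correct and follows essentially the same route as the paper's: the inequality $|f(x)|^p m(x)\le\sum_{y\in s_n(x)}|f(y)|^p m(y)$ you obtain via H\"older is exactly the paper's Jensen step applied to the identity of Lemma \ref{lem:harm}, and the summation over $n$ against $\tilde\sigma(\ell(x)-n)$ is identical. The only cosmetic difference is that you argue directly for each $x$ rather than by contradiction.
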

\begin{proof}
  Let $f$ be harmonic  and suppose that $f \in L^p(m\cdot \tilde{\sigma})$. By contradiction, assume that $f(x) \ne 0$ for some $x \in T$. Then, by Lemma \ref{lem:harm}
  \begin{align*}
      |f(x)|^p=\bigg|\sum_{y \in s_n(x)} f(y) \frac{m(y)}{m(x)}\bigg|^p \le \frac{1}{m(x)}\sum_{y \in s_n(x)}|f(y)|^p m(y) \qquad \forall n \in \mathbb N,
  \end{align*} where we have used that $m$ is a flow measure so $\sum_{y \in s_n(x)}m(y)=m(x)$ and Jensen's inequality in the last step. It follows that 
  \begin{align*}
  \|f\|_{L^p(T_x,m\cdot \tilde{\sigma})}^p&=\sum_{n=0}^\infty \sum_{y \in s_n(x)}|f(y)|^p m(y)\tilde{\sigma}(\ell(y)) \\&=\sum_{n=0}^\infty \tilde{\sigma}(\ell(x)-n) \sum_{y \in s_n(x)}|f(y)|^p m(y) \\  %&\ge |f(x)|^p\sum_{n=0}^\infty \sum_{y \in s_n(x)} m(x)\tilde{\sigma}(\ell(y)) \\ &\ge |f(x)|^p\sum_{n=0}^\infty \sum_{y \in s_n(x)} m(y)\tilde{\sigma}(\ell(y))\\ 
  %&=|f(x)|^p\sum_{n=0}^\infty \tilde{\sigma}(\ell(x)-n)\sum_{y \in s_n(x)} m(y)\\
  &\ge|f(x)|^pm(x)\sum_{n=0}^\infty \tilde{\sigma}(\ell(x)-n) \\&=\infty,
  \end{align*} which is a contradiction because $f \in L^p(m\cdot \tilde{\sigma})$.
\end{proof}

For every $x \in T$ and $n \in \mathbb Z$ we set 
\begin{align}\label{cn}
S_{n,x}(\sigma)=\sum_{k=0}^\infty \sigma(\ell(x)+n-k).
\end{align}
We shall write $S_x(\sigma)$ in place of $S_{0,x}(\sigma)$. Notice also that $S_{n,x}(\sigma)=S_{p^n(x)}(\sigma)$ for every $n \in \mathbb N$ and $x \in T$. 

The following function will be useful for obtaining an explicit expression of the Bergman kernel. We define $\Psi: T \times T \times T \to \mathbb R$ by \begin{align}\label{Psi}
 \Psi(v,z,x):=\begin{cases} 0 &\{x,z\}\not\subset \dot{T}_v, \\ \frac{1}{m(y)}-\frac{1}{m(v)} 
    &\{x,z\}\subset T_y, y \in s(v), \\ 
    -\frac{1}{m(v)} &\text{otherwise}.
    \end{cases}
\end{align} We point out that the support of $\Psi(p^{n+1}(x),p^n(x),\cdot)$ is exactly $\dot{T}_{p^{n+1}(x)}$ for every $n \in \mathbb N$ and $x \in T$.

To simplify the notation, we introduce the gradient $\nabla f(x) := f(x) - f(p(x))$, which appears in the statement of the next lemma.
\begin{lemma}
    The following holds 
    \begin{align}\label{f: fgnH} \langle f, \Psi(p(x),x,\cdot) \rangle_{L^2(m\cdot\sigma)}=S_x(\sigma) \nabla f(x) \qquad \forall f \in B^2(m \cdot \sigma), \forall x \in T.
 %   \langle f, g_n^H \rangle_{B^2(m\cdot\sigma)}=c_n\sum_{x \in s(y)} f(x) g(x) m(x),
\end{align} 
\end{lemma}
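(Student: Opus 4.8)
The plan is to make the kernel $\Psi(p(x),x,\cdot)$ fully explicit, reduce the pairing to two sectorial sums, and evaluate each of them by the harmonicity identity of Lemma \ref{lem:harm}. First I would specialise \eqref{Psi} to $v=p(x)$, $z=x$. Since $x\in s(p(x))$, the condition ``$\{x,u\}\subset T_y$ for some $y\in s(p(x))$'' forces $y=x$ (it is the unique child of $p(x)$ lying above $x$) and hence $u\in T_x$; moreover $\dot{T}_{p(x)}=\bigsqcup_{y\in s(p(x))}T_y$, so the ``otherwise'' alternative is precisely $u\in\dot{T}_{p(x)}\setminus T_x$. Thus
$$\Psi(p(x),x,u)=\begin{cases}\dfrac{1}{m(x)}-\dfrac{1}{m(p(x))} & u\in T_x,\\[4pt] -\dfrac{1}{m(p(x))} & u\in\dot{T}_{p(x)}\setminus T_x,\\[4pt] 0 & u\notin\dot{T}_{p(x)},\end{cases}$$
and (as $\Psi$ is real‑valued) the pairing splits as
$$\langle f,\Psi(p(x),x,\cdot)\rangle_{L^2(m\cdot\sigma)}=\Big(\tfrac{1}{m(x)}-\tfrac{1}{m(p(x))}\Big)\!\!\sum_{u\in T_x}\!\! f(u)m(u)\sigma(u)\;-\;\tfrac{1}{m(p(x))}\!\!\sum_{u\in\dot{T}_{p(x)}\setminus T_x}\!\! f(u)m(u)\sigma(u).$$

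The engine of the proof is the identity, valid for every harmonic $f\in L^2(m\cdot\sigma)$ and every $y\in T$,
$$\sum_{u\in T_y}f(u)m(u)\sigma(u)=S_y(\sigma)\,f(y)m(y).$$
I would first observe that the sum converges absolutely: by Cauchy--Schwarz and Remark \ref{rem:carl}, $\sum_{u\in T_y}|f(u)|m(u)\sigma(u)\le\|f\|_{L^2(m\cdot\sigma)}\,(m\cdot\sigma(T_y))^{1/2}$, which is finite under Assumption i). This legitimises reindexing $T_y=\bigsqcup_{n\ge0}s_n(y)$ level by level; since $\sigma$ is constant, equal to $\sigma(\ell(y)-n)$, on $s_n(y)$, Lemma \ref{lem:harm} gives $\sum_{u\in s_n(y)}f(u)m(u)=f(y)m(y)$, and summing over $n$ yields the claim with $S_y(\sigma)=\sum_{n\ge0}\sigma(\ell(y)-n)$.

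To finish, I would apply this identity with $y=x$ to the first sum, and to the second sum write $\dot{T}_{p(x)}=\bigsqcup_{y\in s(p(x))}T_y$. Every $y\in s(p(x))$ satisfies $\ell(y)=\ell(x)$, hence $S_y(\sigma)=S_x(\sigma)$, so
$$\sum_{u\in\dot{T}_{p(x)}}f(u)m(u)\sigma(u)=S_x(\sigma)\sum_{y\in s(p(x))}f(y)m(y)=S_x(\sigma)\,f(p(x))m(p(x)),$$
the last equality being Lemma \ref{lem:harm} with $n=1$. Subtracting the $T_x$-contribution $S_x(\sigma)f(x)m(x)$ gives the sum over $\dot{T}_{p(x)}\setminus T_x$; plugging both evaluations into the split pairing, the terms proportional to $m(x)/m(p(x))$ cancel and one is left with $S_x(\sigma)\big(f(x)-f(p(x))\big)=S_x(\sigma)\nabla f(x)$.

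The only delicate point — and it is minor — is getting the three-case description of $\Psi(p(x),x,\cdot)$ right (in particular recognising that the middle case collapses to $u\in T_x$) and checking that all the sectorial series converge absolutely, which is exactly what Assumption i) and Remark \ref{rem:carl} guarantee; once this is set, the rest is a short algebraic simplification driven entirely by Lemma \ref{lem:harm}.
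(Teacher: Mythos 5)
Your proof is correct and follows essentially the same route as the paper: both arguments reduce the pairing to sums over the sector $\dot{T}_{p(x)}$ and evaluate them via Lemma \ref{lem:harm}, the only difference being bookkeeping (you split spatially into $T_x$ and $\dot{T}_{p(x)}\setminus T_x$ and then sum level by level, while the paper sums level by level first and evaluates each level's inner sum directly). Your explicit check of absolute convergence via Cauchy--Schwarz and Remark \ref{rem:carl}, which justifies the reordering, is a small point the paper leaves implicit.
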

\begin{proof}
    Fix $x \in T$. Then,
    \begin{align*}
        \langle f, \Psi(p(x),x,\cdot) \rangle_{L^2(m\cdot\sigma)}&=\sum_{y \in T} f(y)\Psi(p(x),x,y) m(y) \sigma(y) \\ 
        &=\sum_{y \in \dot{T}_{p(x)}} f(y)\Psi(p(x),x,y) m(y)\sigma(y)\\ 
        &=\sum_{n=0}^\infty \sigma(\ell(p(x))-n-1)\sum_{u \in s(p(x))}\sum_{y \in s_{n}(u)} f(y) \Psi(p(x),x,y) m(y).
        \end{align*}By \eqref{Psi} and Lemma \ref{lem:harm}, we get that the inner sums above are equal to 
        \begin{align*}
       &\sum_{u \in s(p(x))\setminus \{x\}}\sum_{y \in s_{n}(u)} \frac{-f(y)}{m(p(x))}  m(y)+ \sum_{y \in s_n(x)}f(y)\bigg(\frac{1}{m(x)}-\frac{1}{m(p(x))}\bigg)m(y) \\ 
       &=\sum_{u \in s(p(x)) \setminus\{x\}}-f(u)\frac{m(u)}{m(p(x))}+f(x)\bigg(1-\frac{m(x)}{m(p(x))}\bigg) \\ 
       &=f(x)- \sum_{y\in s(p(x))} f(y) \frac{m(y)}{m(p(x))}\\ 
       &=f(x)-f(p(x))
    \\   &=\nabla f(x).
    \end{align*} The result follows by noticing that $$\sum_{n=0}^\infty \sigma(\ell(p(x))-n-1)=\sum_{n=0}^\infty \sigma(\ell(x)-n)=S_x(\sigma).$$
\end{proof}

\begin{proposition}
    $B^2(m\cdot\sigma)$ is a Hilbert space.
\end{proposition}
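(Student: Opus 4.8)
The plan is to prove that $B^2(m\cdot\sigma)$ is a \emph{closed} subspace of the Hilbert space $L^2(m\cdot\sigma)$; since by definition it carries the restriction of the $L^2(m\cdot\sigma)$ inner product, closedness immediately yields completeness, hence the Hilbert space property. Concretely, I would take an arbitrary sequence $\{f_j\}_j$ of harmonic functions with $f_j \to f$ in $L^2(m\cdot\sigma)$ and show that the limit $f$ is harmonic, so that $f \in B^2(m\cdot\sigma)$.

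The first step is to observe that $L^2(m\cdot\sigma)$-convergence forces pointwise convergence. Fix $x \in T$ and let $\delta_x$ denote the function equal to $1$ at $x$ and $0$ elsewhere. Since $m$ is a positive flow measure and $\sigma$ is positive, $\|\delta_x\|_{L^2(m\cdot\sigma)}^2 = m(x)\,\sigma(x) \in (0,\infty)$, while $g(x) = \langle g, \delta_x\rangle_{L^2(m\cdot\sigma)}/(m(x)\sigma(x))$ for every $g \in L^2(m\cdot\sigma)$. The Cauchy--Schwarz inequality then gives
\begin{align*}
|g(x)| \le \frac{\|g\|_{L^2(m\cdot\sigma)}}{\sqrt{m(x)\,\sigma(x)}},
\end{align*}
so $g \mapsto g(x)$ is a bounded linear functional on $L^2(m\cdot\sigma)$; applying it to $g = f_j - f$ yields $f_j(x) \to f(x)$ for every $x \in T$.

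The second step is to transfer harmonicity to the limit, and here I would use that $T$ is locally finite, so that $s(x)$ is a finite set and $\Delta$ is a finite average at each vertex. Pointwise convergence on the finite set $\{x\}\cup s(x)$ then gives
\begin{align*}
\Delta f(x) = f(x) - \sum_{y \in s(x)} f(y)\,\frac{m(y)}{m(x)} = \lim_{j\to\infty}\Big( f_j(x) - \sum_{y \in s(x)} f_j(y)\,\frac{m(y)}{m(x)}\Big) = \lim_{j\to\infty}\Delta f_j(x) = 0
\end{align*}
for every $x \in T$, so $f$ is harmonic. Hence $B^2(m\cdot\sigma)$ is closed in $L^2(m\cdot\sigma)$ and therefore a Hilbert space. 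I do not expect a genuine obstacle: the only delicate point is the interchange of limit and summation in the last display, which is legitimate precisely because of local finiteness (for an infinite sum it could in principle fail). Alternatively one could exploit the reproducing identity from the preceding lemma, but the closedness argument above is shorter and self-contained.
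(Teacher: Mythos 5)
Your proof is correct and follows essentially the same route as the paper: both establish that $B^2(m\cdot\sigma)$ is closed in $L^2(m\cdot\sigma)$ by passing from norm convergence to pointwise convergence and then using local finiteness of $s(x)$ to interchange limit and the finite sum defining $\Delta$. The only cosmetic difference is that you obtain pointwise convergence of the whole sequence via the boundedness of point evaluations, whereas the paper extracts a pointwise convergent subsequence; both work equally well.
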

\begin{proof}
%\st{It suffices to prove that $\Delta$ is bounded on  $L^2(m\cdot \sigma)$. Indeed, in this case, it would follow that $B^2(m\cdot\sigma)$ is closed in $L^2(m\cdot \sigma).$ Let $f \in B^2(m\cdot\sigma)$. Then,} 

   % \begin{align*}
    %\sum_{x \in T} |P_m f(x)|^2m(x)\sigma(x) &\le  \sum_{x \in T} \sum_{y \in s(x)} |f(y)|^2 \frac{m(y)}{m(x)} m(x)\sigma(x) \\ 
    %    &= \sum_{y \in T} |f(y)|^2  m(y) \sum_{x=p(y)}  \sigma(x) \\ 
     %   &\le C \sum_{y \in T} |f(y)|^2  m(y)\sigma(y),
    %\end{align*} where in the last inequality we have used $iii)$. Since $\Delta_m=I-P_m$, this concludes the proof.
It suffices to prove that $B^2(m\cdot\sigma)$ is closed in $L^2(m \cdot \sigma)$.
    Let $\{f_n\}_{n \in \mathbb N} \subset B^2(m\cdot\sigma)$ be a convergent sequence in $L^2(m\cdot \sigma)$ and denote its limit by $f$. Then, there exists a subsequence $\{f_{n_k}\}_{k \in \mathbb N}$ that converges {\it pointwise} to $f$ (this follows because there are no non-trivial subsets of $T$ of measure $0$). It follows that for every $x \in T$ 
    $$0=\Delta f_{n_k}(x)= f_{n_k}(x)-\sum_{y \in s(x)} f_{n_k}(y) \frac{m(y)}{m(x)} \xrightarrow[\mathrm{as} \ k \to \infty]{} f(x)-\sum_{y \in s(x)} f(y) \frac{m(y)}{m(x)}=\Delta f(x),$$
 because $s(x)$ contains a finite number of vertices.
\end{proof}
We now introduce the function $K : T \times T \to \mathbb C$ defined by 
\begin{align}\label{def:K}
    K(x,y)=\sum_{n=0}^\infty \frac{\Psi(p^{n+1}(x),p^n(x),y)}{S_{n,x}(\sigma)}.
\end{align}
We shall prove that $K(\cdot,\cdot)$ is the reproducing kernel of $B^2(m\cdot\sigma)$.
\begin{remark}\label{rem:Psi} Notice that $K$ satisfies the following symmetry property: 
\begin{align*}
   K(x,y)=K(y,x) \qquad \forall n \in \mathbb N, \forall x,y \in T.
\end{align*} %This follows by observing that $K(\cdot, \cdot)$ is real, hence for every couple of point $x,y \in T$
%\begin{align*}
 %   K(x,y)=\langle K(x,\cdot), K(y,\cdot) \rangle_{B^2(m\cdot \sigma)}=\langle K(y,\cdot), K(x,\cdot) \rangle_{B^2(m\cdot \sigma)}=K(y,x).
%\end{align*}

Indeed, recall that
$\Psi(p^{n+1}(x),p^n(x),y)$ is not zero if and only if $y \in \dot{T}_{p^{n+1}(x)}.$ We distinguish two cases: if $x,y$ are not one above the other, then, $y \in \dot{T}_{p^{n+1}(x)}$ implies $x \wedge y \in T_{p^{n+1}(x)}$, thus by \eqref{Psi} 
\begin{align}\label{xnotabovey}
  \nonumber  K(x,y)&=\sum_{n: x\wedge y \in {T}_{p^{n+1}(x)}} \frac{\Psi(p^{n+1}(x),p^n(x),y)}{S_{n,x}(\sigma)}\\ &=\sum_{n \ge 0}\frac{1}{S_{n,x\wedge y}(\sigma)}\bigg(\frac{1}{m(p^{n}(x\wedge y))}-\frac{1}{m(p^{n+1}(x \wedge y))}\bigg)\\ \nonumber&-\frac{1}{S_{p^{n_0}(x)}(\sigma)m(x\wedge y)},
\end{align} where $n_0$ is such that $p^{n_0+1}(x)=x\wedge y$. Observe that $$S_{p^{n_0}(x)}(\sigma)=\sum_{k \ge 0} \sigma(\ell(x\wedge y)-1-k),$$ so that the above expression of $K$ is symmetric in $x$ and $y$ because $x\wedge y=y \wedge x$.

If $x$ and $y$ are one above the other, then $x\wedge y \in \{x,y\}$. In this case, we have that
\begin{align*}
    K(x,y)&=\sum_{n \ge 0}\frac{1}{S_{n,x\wedge y}(\sigma)}\bigg(\frac{1}{m(p^{n}(x\wedge y))}-\frac{1}{m(p^{n+1}(x \wedge y))}\bigg),
\end{align*}  which is again symmetric in the two variables. 
\end{remark} 

The next lemma will be instrumental to prove that $K$ is the reproducing kernel of $B^2(m \cdot \sigma)$. 
\begin{lemma}\label{lem:ort}
    The following hold: 
    \begin{itemize}
        \item[a)] for every $x,y \in T$ \begin{align}\label{claimPsi}
    \sum_{n=0}^\infty \bigg| \frac{\Psi(p^{n+1}(x),p^n(x),y)}{S_{n,x}(\sigma)}\bigg|<\infty;
    \end{align}
    \item[b)] for every $v\in T$ the family $\{\Psi(p^{k+1}(v),p^k(v),\cdot)\}_{k \in \mathbb N}$ forms an orthogonal system in $B^2(m \cdot \sigma)$. 
    \end{itemize}
\end{lemma}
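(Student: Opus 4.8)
The plan is to prove parts (a) and (b) of Lemma \ref{lem:ort} essentially by unwinding the definition of $\Psi$ and exploiting the decay assumptions on $\sigma$ and $m$.

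\medskip

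For part (a), I would first observe that, by the support property of $\Psi$ noted right after \eqref{Psi}, the term $\Psi(p^{n+1}(x),p^n(x),y)$ vanishes unless $y \in \dot T_{p^{n+1}(x)}$, i.e.\ unless $x\wedge y$ lies (weakly) below $p^{n+1}(x)$; equivalently $n \ge n_0$ for the threshold $n_0$ with $p^{n_0+1}(x) \succeq x\wedge y$ (with the convention that if $y$ lies below $x$ every $n\ge 0$ contributes). For each such $n$, \eqref{Psi} gives $|\Psi(p^{n+1}(x),p^n(x),y)| \le 1/m(p^n(x\wedge y))$ or $1/m(p^{n_0}(x))$ when $n=n_0$, so up to reindexing the tail of the series is controlled by
\[
\sum_{k \ge 0} \frac{1}{S_{k}(\sigma)\, m(p^{k}(x\wedge y))},
\]
where I write $S_k$ for $S_{k,x\wedge y}(\sigma)$. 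Now $S_{k,x\wedge y}(\sigma) = \sum_{j\ge 0}\sigma(\ell(x\wedge y)+k-j) \ge \sigma(\ell(x\wedge y)+k)$ is bounded below, and more importantly, combining this with Assumption (ii) in the form \eqref{assumtionii2} (which gives $m(p^n(o))\sigma(n) > c$ for large $n$, hence by the flow/level comparison $m(p^k(x\wedge y))\,\sigma(\ell(x\wedge y)+k) \gtrsim m(p^k(x\wedge y))\,\sigma(\text{something}) $ relative to $o$) one recovers summability from \eqref{assumptionii1}. The cleanest route: since $S_{k,x\wedge y}(\sigma) \ge \sigma(\ell(x\wedge y)+k)$ up to shifting, the general term is $\le \bigl(\sigma(\ell(x\wedge y)+k)\,m(p^k(x\wedge y))\bigr)^{-1}$, and by \eqref{assumptionii1} applied at the base vertex $x\wedge y$ (Assumption (ii) holds for any vertex, not just $o$, by the mass-preserving property of $m$) this is a convergent series. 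I should double-check that Assumption (ii) indeed transfers from $o$ to an arbitrary vertex $x\wedge y$; this follows because $m(p^n(v))/m(p^n(o))$ is eventually constant once $p^n(v)=p^n(o)$, and only finitely many initial terms differ.

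\medskip

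For part (b), fix $v\in T$ and take $k < j$ in $\mathbb N$; I want $\langle \Psi(p^{k+1}(v),p^k(v),\cdot),\Psi(p^{j+1}(v),p^j(v),\cdot)\rangle_{L^2(m\cdot\sigma)} = 0$. The key point is a telescoping/mass-cancellation identity: for a \emph{fixed} vertex $u$ and its parent $p(u)$, the function $\Psi(p(u),u,\cdot)$ is, on the sector $\dot T_{p(u)}$, equal to $-1/m(p(u))$ off $T_u$ and $1/m(u)-1/m(p(u))$ on $T_u$; in particular it is constant on each of the sub-sectors rooted at the successors of $p(u)$. Using Lemma \ref{lem:harm} is not available here since $\Psi$ is not harmonic, so instead I would compute the inner product directly by summing level by level. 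Write $u_k := p^k(v)$, $u_j := p^j(v)$, so $u_k$ lies strictly below $u_j$. On the support of $\Psi(p^{j+1}(v),p^j(v),\cdot) = \Psi(p(u_j),u_j,\cdot)$, namely $\dot T_{p(u_j)}$, the function $\Psi(p(u_k),u_k,\cdot)$ is \emph{constant} — indeed constant equal to $1/m(u_k)-1/m(p(u_k))$ on all of $T_{u_k}$, which is where both functions are simultaneously nonzero in a "refined" way, and one must track the coarser region too. More carefully: decompose $\dot T_{p(u_j)}$ into $T_{u_j}$ and its complement within $\dot T_{p(u_j)}$. On the complement, $\Psi(p(u_j),u_j,\cdot) = -1/m(p(u_j))$ is constant and $\Psi(p(u_k),u_k,\cdot)=-1/m(p(u_k))$ is constant (since that region misses $T_{u_k}$), so that contribution is $-\frac{1}{m(p(u_j))}\cdot\bigl(-\frac{1}{m(p(u_k))}\bigr)$ times $m\cdot\sigma$ of that region. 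On $T_{u_j}$, $\Psi(p(u_k),u_k,\cdot)$ takes the value $-1/m(p(u_k))$ outside $T_{u_k}$ and $1/m(u_k)-1/m(p(u_k))$ inside $T_{u_k}$, while $\Psi(p(u_j),u_j,\cdot)$ on $T_{u_j}$ equals $1/m(u_j)-1/m(p(u_j))$ outside $T_{\text{child of }u_j\text{ toward }v}$ and the refined value on that child's sector. The cancellation then comes from the mass-preserving identity $\sum_{y\in s(u)} m(y) = m(u)$ applied repeatedly, together with $m\cdot\sigma(T_x) = m(x)\sum_{n\le\ell(x)}\sigma(n)$ from Remark \ref{rem:carl}: summing a constant $c$ against $m\cdot\sigma$ over a sector $T_x$ gives $c\,m(x)\sum_{n\le\ell(x)}\sigma(n)$, and the alternating structure of $\Psi$ (the $1/m(y)-1/m(v)$ versus $-1/m(v)$) is precisely engineered so these weighted masses telescope to zero. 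In fact the slickest argument: by \eqref{f: fgnH}, for $f\in B^2(m\cdot\sigma)$ we have $\langle f,\Psi(p(x),x,\cdot)\rangle = S_x(\sigma)\nabla f(x)$; but is $\Psi(p^{k+1}(v),p^k(v),\cdot)$ itself in $B^2$? It need not be harmonic, so I cannot directly plug it in. Instead I would argue: $\Psi(p^{j+1}(v),p^j(v),\cdot)$ is orthogonal to every function that is \emph{harmonic on $T_{p^j(v)}$ and constant on sectors $T_u$, $u\in s(p^j(v))$}; and $\Psi(p^{k+1}(v),p^k(v),\cdot)$ restricted to $\dot T_{p^{j+1}(v)}$, after adding the appropriate constant, is a multiple of such a function plus a global constant which integrates against $\Psi(p^{j+1}(v),p^j(v),\cdot)$ to zero by the mass-preserving property.

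\medskip

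The main obstacle I anticipate is precisely the bookkeeping in part (b): keeping straight which of the three cases in \eqref{Psi} applies to $\Psi(p^{k+1}(v),p^k(v),y)$ on each piece of the support $\dot T_{p^{j+1}(v)}$ of the other factor, and then verifying that the weighted sums cancel. The cleanest path is to prove a single \emph{auxiliary orthogonality lemma}: for any vertex $u$, the function $g_u := \Psi(p(u),u,\cdot)$ satisfies $\langle h, g_u\rangle_{L^2(m\cdot\sigma)} = 0$ for every function $h$ supported on $T_{p(u)}$ that is constant on each sector $\{T_w : w\in s(p(u))\}$ — this is immediate from $\sum_{w\in s(p(u))} m(w) = m(p(u))$ and Remark \ref{rem:carl}. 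Since $g_{u_k}$ restricted to $\dot T_{p(u_j)}$ is of exactly this form with $u=u_j$ (it is constant on each sector hanging off $p(u_j)$, because $T_{u_k}\subset T_w$ for exactly one $w\in s(p(u_j))$ and $g_{u_k}$ is constant on $T_w\setminus T_{u_k}$ and on $T_{u_k}$ — wait, not constant on all of $T_w$), one reduces to a further application of the same principle one level down, and an easy induction closes it. Part (a) is routine once the reduction to $\sum_k \bigl(\sigma(\ell(x\wedge y)+k)m(p^k(x\wedge y))\bigr)^{-1} < \infty$ is made and Assumption (ii) is seen to be basepoint-independent.
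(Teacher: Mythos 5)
Part a) of your proposal is correct and is essentially the paper's argument: bound $S_{n,x\wedge y}(\sigma)$ from below by a single term $\sigma(\ell(x\wedge y)+n)$ of its defining series and invoke Assumption ii); your remark that the assumption transfers from $o$ to an arbitrary base vertex because the two parent rays eventually merge is exactly the point the paper leaves implicit.

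Part b) has a genuine gap. First, your case analysis misreads the support of the finer function: on $\dot{T}_{p(u_j)}\setminus T_{u_j}$ the function $\Psi(p(u_k),u_k,\cdot)$ is identically $0$, not $-1/m(p(u_k))$, since for $k<j$ its support $\dot{T}_{p(u_k)}$ is entirely contained in $T_{u_j}$. Second, the ``auxiliary orthogonality lemma'' you propose is false as stated: $\langle h,\Psi(p(u),u,\cdot)\rangle_{L^2(m\cdot\sigma)}$ does not vanish for every $h$ that is constant on each sector $T_w$, $w\in s(p(u))$, with possibly different constants --- taking $h$ to be the indicator function of $T_u$ gives $S_u(\sigma)\bigl(1-m(u)/m(p(u))\bigr)$, which is nonzero whenever $p(u)$ has more than one successor. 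The vanishing only holds when $h$ is a \emph{single} constant on all of $\dot{T}_{p(u)}$, via the flow identity $\sum_{w\in s(p(u))}m(w)=m(p(u))$ together with $m\cdot\sigma(T_w)=m(w)S_w(\sigma)$. Third, you then try to apply this lemma with $u=u_j$ and $h=g_{u_k}$, i.e.\ with the roles of the two factors reversed from the configuration that works, and you notice yourself mid-sentence that $g_{u_k}$ is not constant on the relevant sector before deferring to an unspecified ``easy induction''. The argument that actually closes the proof (and is the paper's) runs in the opposite direction: for $k<j$ the \emph{coarser} function $\Psi(p^{j+1}(v),p^j(v),\cdot)$ equals the single constant $1/m(p^j(v))-1/m(p^{j+1}(v))$ on the whole support $\dot{T}_{p^{k+1}(v)}$ of the \emph{finer} one, so the inner product reduces to that constant times $\sum_{y\in\dot{T}_{p^{k+1}(v)}}\Psi(p^{k+1}(v),p^k(v),y)\,m(y)\sigma(y)$, and this sum is zero by the mean-zero computation just described. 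No induction is needed.
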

\begin{proof} %We claim that for every $x,y \in T$ \begin{align}\label{claimPsi}
 %   \sum_{n=0}^\infty \bigg| \frac{\Psi(p^{n+1}(x),p^n(x),y)}{S_{n,x}(\sigma)}\bigg|<\infty.
  %  \end{align} 
  We first prove a). Indeed, define$$C_{x,y}=\begin{cases}0 &\text{if $x$ and $y$ are one above the other,} \\ 
  \frac{1}{S_{n_0,x}(\sigma)m(x\wedge y)} &\text{otherwise,}      
  \end{cases}$$ where in the second case $n_0$ is the unique integer such that $p^{n_0+1}(x)=x\wedge y$.
  By arguing as in Remark \ref{rem:Psi}
    \begin{align*}
         \sum_{n=0}^\infty \bigg| \frac{\Psi(p^{n+1}(x),p^n(x),y)}{S_{n,x}(\sigma)}\bigg|&\le C_{x,y}+ \sum_{n \ge 0} \frac{1}{m(p^n(x\wedge y))S_{n,x \wedge y}(\sigma)} \\ &\le C_{x,y}+\sum_{n \ge 0} \frac{1}{m(p^n(x\wedge y))\sigma(p^n(x\wedge y))}\\ &< \infty,
    \end{align*} by Assumption \hyperref[assumption:ii]{ii)}.
    
    We next prove b). Assume without loss of generality that $j > k$. We have that 
\begin{align}\nonumber\big\langle &\Psi(p^{k+1}(v),p^k(v), \cdot), \Psi(p^{j+1}(v),p^j(v),\cdot)\big\rangle_{L^2(m\cdot \sigma)}  \\ \nonumber &=\sum_{y \in \dot{T}_{p^{j+1}(v)}\cap \dot{T}_{p^{k+1}(v)}} \Psi(p^{k+1}(v),p^k(v), y)\Psi(p^{j+1}(v),p^j(v),y)m(y)\sigma(y) \\ \label{scalarprod}
&=\sum_{y \in \dot{T}_{p^{k+1}(v)}} \Psi(p^{k+1}(v),p^k(v), y) \Psi(p^{j+1}(v),p^j(v),y) m(y)\sigma(y).
\end{align} Observe that $\Psi(p^{j+1}(v),p^j(v),\cdot)$ is constant on $\dot{T}_{p^{k+1}(v)}$, so \eqref{scalarprod} is equal to
\begin{align*}
&M_{j,v}\bigg[\sum_{y \in \dot{T}_{p^{k+1}(v)} \setminus T_{p^k(v)}}\!\!\!\!\!\! -\frac{m(y)\sigma(y)}{m(p^{k+1}(v))} +\sum_{y \in T_{p^k(v)}} \bigg(\frac{1}{m(p^{k}(v))}-\frac{1}{m(p^{k+1}(v))}\bigg)m(y)\sigma(y)\bigg] \\ 
&=M_{j,v}S_{k,v}(\sigma)\bigg[\frac{m(p^{k}(v))-m(p^{k+1}(v))}{m(p^{k+1}(v))} + \bigg(\frac{1}{m(p^k(v))}-\frac{1}{m(p^{k+1}(v))}\bigg)m(p^k(v))\bigg] \\ 
&=0,
\end{align*} where  $M_{j,v}:=\bigg(\frac{1}{m(p^{j}(v))}-\frac{1}{m(p^{j+1}(v))}\bigg)$.
 \end{proof}

%  \textcolor{red}{In the next proposition we prove that $K(v,\cdot)$ belongs to $B^2(m \cdot \sigma)$ for every $v \in T$.}
\begin{proposition}\label{prop:convergence}
    For every $v \in T$, $K(v,\cdot) \in B^2(m \cdot \sigma).$
\end{proposition}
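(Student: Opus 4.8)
The plan is to show that $K(v,\cdot)$ is both square-integrable with respect to $m\cdot\sigma$ and harmonic on $T$, since $B^2(m\cdot\sigma)$ consists precisely of the $L^2(m\cdot\sigma)$ harmonic functions.

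\emph{Square-integrability.} First I would fix $v\in T$ and recall from the definition \eqref{def:K} that $K(v,y)=\sum_{n=0}^\infty S_{n,v}(\sigma)^{-1}\Psi(p^{n+1}(v),p^n(v),y)$. The key observation is that the $n$-th term is supported on $\dot T_{p^{n+1}(v)}$, and these sets are nested increasingly in $n$; moreover on the ``annulus'' $\dot T_{p^{n+1}(v)}\setminus \dot T_{p^{n}(v)}$ only the terms with index $\ge n$ contribute, and all but possibly one of them take the constant value $-1/m(p^{k+1}(v))$ there. Thus $K(v,\cdot)$ is constant on each such annulus. Using Remark~\ref{rem:carl}, the measure $m\cdot\sigma$ of $\dot T_{p^{k+1}(v)}$ is $m(p^{k+1}(v))S_{k+1,v}(\sigma)$, so I would try to bound
\begin{align*}
\|K(v,\cdot)\|_{L^2(m\cdot\sigma)}^2 \le \sum_{k=0}^\infty \Big(\sup_{\dot T_{p^{k+1}(v)}\setminus \dot T_{p^k(v)}}|K(v,\cdot)|\Big)^2\, m\cdot\sigma\big(\dot T_{p^{k+1}(v)}\setminus \dot T_{p^k(v)}\big),
\end{align*}
and then estimate the value of $K(v,\cdot)$ on that annulus by a telescoping/geometric argument analogous to Lemma~\ref{lem:ort}a), invoking Assumption~\hyperref[assumption:ii]{ii)} for convergence. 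Alternatively — and this is probably cleaner — I would use part b) of Lemma~\ref{lem:ort}: since $\{\Psi(p^{k+1}(v),p^k(v),\cdot)\}_k$ is an orthogonal system, by Bessel/Parseval it suffices to check $\sum_{n} S_{n,v}(\sigma)^{-2}\|\Psi(p^{n+1}(v),p^n(v),\cdot)\|_{L^2(m\cdot\sigma)}^2<\infty$, and a direct computation of $\|\Psi(p^{n+1}(v),p^n(v),\cdot)\|^2_{L^2(m\cdot\sigma)}$ (splitting $\dot T_{p^{n+1}(v)}$ into $T_{p^n(v)}$ and its complement, exactly as in the proof of Lemma~\ref{lem:ort}b)) gives something comparable to $S_{n,v}(\sigma)\big(m(p^n(v))^{-1}-m(p^{n+1}(v))^{-1}\big)$, so the series is dominated by $\sum_n S_{n,v}(\sigma)^{-1} m(p^n(v))^{-1}$, finite by Assumption~\hyperref[assumption:ii]{ii)}.

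\emph{Harmonicity.} For this I would fix $x\in T$ and compute $\Delta_y K(v,y)$ at $y=x$, i.e. $K(v,x)-\sum_{u\in s(x)}K(v,u)m(u)/m(x)$. The absolute convergence from Lemma~\ref{lem:ort}a) lets me interchange $\Delta$ with the sum over $n$, so it is enough to show $\Delta_y \Psi(p^{n+1}(v),p^n(v),y)\big|_{y=x}=0$ for every $n$ and every $x$. By the three-case definition \eqref{Psi} of $\Psi$, the function $y\mapsto \Psi(p^{n+1}(v),p^n(v),y)$ is locally constant except where it jumps between the values in the second and third cases; more precisely it is constant on each $T_u$ for $u\in s(p^{n+1}(v))$ and vanishes outside $\dot T_{p^{n+1}(v)}$, so for any $x$ the values $\Psi(\cdot,x)$ and $\Psi(\cdot,u)$, $u\in s(x)$, are all equal \emph{unless} $x=p^{n+1}(v)$ itself. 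When $x\ne p^{n+1}(v)$, the forward average equals the value at $x$ and $\Delta_y\Psi=0$ trivially. When $x=p^{n+1}(v)$: then $\Psi(\cdot,x)=-1/m(x)$ (third case, since $x\notin \dot T_x$), while for $u\in s(x)=s(p^{n+1}(v))$ we get $\Psi(\cdot,u)=1/m(u)-1/m(x)$ if $u$ is the child through which $p^n(v)$ descends — call it $u_\ast=p^n(v)$ — and $\Psi(\cdot,u)=-1/m(x)$ otherwise; averaging, $\sum_{u\in s(x)}\Psi(\cdot,u)m(u)/m(x)= \big(1/m(u_\ast)\big)m(u_\ast)/m(x) - 1/m(x)\cdot\sum_{u}m(u)/m(x) = 1/m(x)-1/m(x)=0$, using that $m$ is a flow measure. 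Hence the average equals $-1/m(x)=\Psi(\cdot,x)$ as well, so $\Delta_y\Psi=0$ there too.

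\emph{Main obstacle.} The routine but slightly delicate point is justifying the interchange of $\Delta$ (a finite sum over $s(x)$, harmless) and the infinite sum over $n$ in the square-integrability and harmonicity arguments, and, for square-integrability, getting the term-by-term $L^2$ estimate into a form where Assumption~\hyperref[assumption:ii]{ii)} applies; the orthogonality from Lemma~\ref{lem:ort}b) is what makes this clean, so I expect the write-up to lean on Bessel's inequality rather than a crude annulus-by-annulus bound. The harmonicity computation itself is purely local and should be a short case check once the interchange is justified.
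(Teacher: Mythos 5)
Your proposal is correct and follows essentially the same route as the paper: the $L^2$ bound via the orthogonality of $\{\Psi(p^{j+1}(v),p^j(v),\cdot)\}_j$ together with the computation $\|\Psi(p^{j+1}(v),p^j(v),\cdot)\|_{L^2(m\cdot\sigma)}^2=S_{j,v}(\sigma)\big(m(p^j(v))^{-1}-m(p^{j+1}(v))^{-1}\big)$ and Assumption \hyperref[assumption:ii]{ii)}, and harmonicity via the local case check that the forward average of each $\Psi$-term reproduces its value, which is exactly the paper's identity \eqref{mediaPsi}. One small slip: at $x=p^{n+1}(v)$ the first case of \eqref{Psi} gives $\Psi(p^{n+1}(v),p^n(v),x)=0$, not $-1/m(x)$ as you state; since your computation correctly shows the children-average is also $0$, the conclusion $\Delta_y\Psi=0$ there is unaffected, but the sentence ``the average equals $-1/m(x)$'' contradicts your own (correct) calculation and should be fixed.
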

\begin{proof} We first show that $K(v,\cdot)$ is harmonic for every $v \in T$.  %A similar estimate can be proved when $x$ and $y$ are one above the other. 
    Notice that
\begin{align}\label{mediaPsi}
    \sum_{z \in s(y)}\Psi(p^{n+1}(x),p^n(x),z)m(z)=\begin{cases} 0 &\text{if $y \not \in \dot{T}_{p^{n+1}(x)}$,} \\ 
    \bigg(\frac{1}{m(p^n(x))}-\frac{1}{m(p^{n+1}(x))}\bigg)m(y)&\text{if $y \in T_{p^n(x)}$,} \\ 
    -\frac{m(y)}{m(p^{n+1}(x))} &\text{otherwise.}
    \end{cases}
\end{align}
Set $K_v(\cdot)=K(v,\cdot)$ and observe that by \eqref{claimPsi} 
\begin{align*}
\sum_{z \in s(y)} K_v(z)\frac{m(z)}{m(y)}&=\sum_{n \ge 0}\sum_{z \in s(y)} \frac{\Psi(p^{n+1}(v),p^n(v),z)}{S_{n,v}(\sigma)}\frac{m(z)}{m(y)}=:I_0. 
\end{align*}
Now we have two possibilities: either $v$ and $y$ are not one below the other, or they are. In the first case, set $n_0$ as the integer such that $p^{n_0+1}(v)=v\wedge y$ as above and observe that an application of \eqref{mediaPsi} and \eqref{xnotabovey} yields
\begin{align*}
I_0&=\sum_{n\ge n_0+1} \frac{1}{S_{n,v}(\sigma)}\bigg(\frac{1}{m(p^n(v))}-\frac{1}{m(p^{n+1}(v))}\bigg)- \frac{1}{S_{n_0,v}(\sigma)m(p^{n_0+1}(v))}\\ 
&=K_v(y),
\end{align*} that is equivalent to say that $\Delta K_v(y)=0$. 
The case when $x$ and $y$ are one below the other is similar and easier; we omit the details. 

It remains to prove that $K_v \in L^2(m \cdot \sigma)$ for every $v \in T$.
    We show that $$\bigg\|K(v,\cdot)-\sum_{j=0}^N\frac{ \Psi(p^{j+1}(v),p^j(v),\cdot)}{S_{j,v}(\sigma)}\bigg\|_{L^2(m\cdot \sigma)}\xrightarrow[\text{as} \  N \to \infty]{} 0.$$ Observe that by Lemma \ref{lem:ort} b), for every couple of integers $N\le M$, \begin{align}\label{triangle}
\bigg\|\sum_{j=N}^M\frac{\Psi(p^{j+1}(v),p^{j}(v),\cdot)}{S_{j,v}(\sigma)}\bigg\|_{L^2(m\cdot \sigma)}^2\le \sum_{j=N}^\infty \bigg\|\frac{\Psi(p^{j+1}(v),p^{j}(v),\cdot)}{S_{j,v}(\sigma)}\bigg\|_{L^2(m\cdot \sigma)}^2.
\end{align} We next estimate each summand of the above sum. Observe that
\begin{align*}
    \bigg\|\frac{\Psi(p^{j+1}(v),p^{j}(v),\cdot)}{S_{j,v}(\sigma)}\bigg\|_{L^2(m\cdot \sigma)}^2&=\sum_{x \in T_{p^{j+1}(v)}} \frac{\Psi(p^{j+1}(v),p^j(v),x)^2}{S_{j,v}(\sigma)^2} m(x)\sigma(x) 
    \end{align*} that by \eqref{Psi} is in turn equal to
    \begin{align*}
    \frac{1}{S_{j,v}(\sigma)^2}\bigg[\bigg(\frac{1}{m(p^{j}(v))}&-\frac{1}{m(p^{j+1}(v))}\bigg)^2\sum_{x \in T_{p^j(v)}}m(x)\sigma(x)+ \\ &+\frac{1}{m(p^{j+1}(v))^2}\sum_{y \in s(p^{j+1}(v)) \setminus \{p^j(v)\}}\sum_{x \in T_y}m(x)\sigma(x)\bigg]\\ 
    &=:I+II.
\end{align*} By a direct computation 
\begin{align*}
    I&=\frac{m(p^j(v))}{S_{j,v}(\sigma)}\bigg(\frac{1}{m(p^{j}(v))}-\frac{1}{m(p^{j+1}(v))}\bigg)^2 \\ II&=\frac{1}{S_{j,v}(\sigma)m(p^{j+1}(v))^2}\big(m(p^{j+1}(v))-m(p^{j}(v))\big),
\end{align*} so that $$I+II \le \frac{C}{S_{j,v}(\sigma)m(p^{j}(v))}.$$It follows by \eqref{triangle} that 
\begin{align*}
    \bigg\|\sum_{j=N}^M\frac{\Psi(p^{j+1}(v),p^{j}(v),\cdot)}{S_{j,v}(\sigma)}\bigg\|_{L^2(m\cdot \sigma)}^2\le C \sum_{j=N}^\infty\frac{1}{S_{j,v}(\sigma)m(p^j(v))},
\end{align*} and right-hand side tends to zero when $N \to \infty$ by Assumption \hyperref[assumption:ii]{ii)} because $p^j(v) \in [o,\omega)$ for every $j \ge N_0$ with $N_0$ sufficiently large.
\end{proof}
\begin{proposition}
    $K(v,\cdot)$ is the reproducing kernel of $B^2(m\cdot \sigma)$.
\end{proposition}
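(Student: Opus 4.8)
The plan is to show that for every $f \in B^2(m\cdot\sigma)$ and every $v \in T$ one has $\langle f, K(v,\cdot)\rangle_{L^2(m\cdot\sigma)} = f(v)$. The natural route is to expand $K(v,\cdot)$ using its defining series \eqref{def:K} and interchange the inner product with the (convergent) sum, reducing the reproducing identity to a computation involving the single building blocks $\Psi(p^{n+1}(v),p^n(v),\cdot)$.

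First I would justify the interchange of $\langle f, \cdot\rangle$ with the series $\sum_{n\ge 0} S_{n,v}(\sigma)^{-1}\Psi(p^{n+1}(v),p^n(v),\cdot)$. By Proposition \ref{prop:convergence} the partial sums converge to $K(v,\cdot)$ in $L^2(m\cdot\sigma)$; since the inner product is continuous in each variable, $\langle f, K(v,\cdot)\rangle = \lim_{N\to\infty} \sum_{n=0}^N S_{n,v}(\sigma)^{-1}\langle f, \Psi(p^{n+1}(v),p^n(v),\cdot)\rangle$. Then I would apply \eqref{f: fgnH} with $x = p^n(v)$ (so that $p(x) = p^{n+1}(v)$ and $S_x(\sigma) = S_{n,v}(\sigma)$), obtaining
\begin{align*}
\langle f, K(v,\cdot)\rangle_{L^2(m\cdot\sigma)} = \sum_{n=0}^\infty \frac{1}{S_{n,v}(\sigma)}\cdot S_{n,v}(\sigma)\,\nabla f(p^n(v)) = \sum_{n=0}^\infty \bigl(f(p^n(v)) - f(p^{n+1}(v))\bigr).
\end{align*}
This is a telescoping series, whose $N$-th partial sum equals $f(v) - f(p^{N+1}(v))$.

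The remaining point — and the only genuinely substantive step — is to show that $f(p^{N+1}(v)) \to 0$ as $N\to\infty$, so that the telescoping sum equals $f(v)$. For this I would invoke the vanishing property \eqref{limite}: since $f \in L^2(m\cdot\sigma) \supset B^2(m\cdot\sigma)$, and since $p^{N+1}(v)$ eventually lies on the ray $[o,\omega)$ (precisely, $p^{N+1}(v) = p^{N+1-n_0}(o)$ for $N$ large, where $n_0$ is determined by $v\wedge o$), we get $\lim_{N\to\infty} f(p^{N+1}(v)) = 0$. One should note that \eqref{limite} as stated is about the ray through $o$, but the argument behind it (summability of $|f|^2 m\cdot\sigma$ against the lower bound \eqref{assumtionii2} on $m\cdot\sigma$ along the ray) applies verbatim along the ray through $v$, or alternatively one simply uses that the two rays coincide from some index on. Hence $\langle f, K(v,\cdot)\rangle_{L^2(m\cdot\sigma)} = f(v)$, which together with $K(v,\cdot)\in B^2(m\cdot\sigma)$ (Proposition \ref{prop:convergence}) and the symmetry $K(x,y)=K(y,x)$ (Remark \ref{rem:Psi}) identifies $K$ as the reproducing kernel.

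The main obstacle I anticipate is purely a matter of care rather than depth: making sure that \eqref{f: fgnH} is legitimately applicable termwise (it is, since each $\Psi(p^{n+1}(v),p^n(v),\cdot)$ has finite $L^2(m\cdot\sigma)$ norm, as computed inside the proof of Proposition \ref{prop:convergence}, so each inner product is well defined) and that the telescoping limit is handled via the correct vanishing-at-infinity statement for $f$ along the relevant ray. No Calderón–Zygmund machinery or hard estimates are needed here; everything reduces to the lemma \eqref{f: fgnH}, the $L^2$-convergence from Proposition \ref{prop:convergence}, and the decay \eqref{limite}.
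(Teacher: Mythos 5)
Your proof is correct and follows essentially the same route as the paper: expand $K(v,\cdot)$ via the $L^2$-convergent series from Proposition \ref{prop:convergence}, apply \eqref{f: fgnH} at $x=p^n(v)$ to get the telescoping sum $\sum_n \nabla f(p^n(v))$, and kill the tail with \eqref{limite}. Your remark that \eqref{limite} must be transported from the ray through $o$ to the ray through $v$ (they coincide from some index on) is a detail the paper glosses over, but it is exactly the right justification.
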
 \begin{proof}
    By \eqref{f: fgnH} applied to $x=p^n(v)$, $$\langle f, \Psi(p^{n+1}(v),p^n(v),\cdot)\rangle_{L^2(m\cdot\sigma)}=S_{n,v}(\sigma) \nabla f(p^n(v)).$$  
    We conclude that 
    \begin{align}
        \langle f, K(v,\cdot)\rangle_{L^2(m \cdot \sigma)}=\sum_{n=0}^\infty f(p^{n}(v))-f(p^{n+1}(v))=f(v),
    \end{align} where in the last equality we have used Proposition \ref{prop:convergence}  and \eqref{limite}.
\end{proof}

We  define the Bergman projection $\cP$ as the operator acting on $L^2(m \cdot \sigma)$ functions by
\begin{align*}
    \cP f(x) = \sum_{y \in T} K(x,y) f(y) m(y) \sigma(y) \qquad \forall x \in T.
\end{align*}
Notice that $\cP f(x)= \langle f, K(x,\cdot) \rangle_{L^2(m \cdot \sigma)}$ for every $x \in T$. It is easy to see that $\cP$ is an orthogonal projection on $L^2(m \cdot \sigma)$. Indeed, by Remark \ref{rem:Psi}, $\cP$ is self-adjoint on $L^2(m \cdot \sigma)$ and for every $f \in L^2(m \cdot \sigma)$
\begin{align*}
    \Delta \cP f(x)&= \cP f(x)-\sum_{z \in s(x)} \cP f(z) \frac{m(z)}{m(x)}  \\ 
    &=\sum_{y \in T} K(x,y) f(y) m(y) \sigma(y)-\sum_{z \in s(x)}\sum_{y \in T} K(z,y) f(y) m(y)\frac{m(z)}{m(x)} \sigma(y) \\ 
    &=\sum_{y \in T} \Delta K(\cdot, y)(x) f(y) m(y) \sigma(y) \\ 
    &=0,
\end{align*}because $K$ is symmetric and $K_y$ is harmonic. If $f \in B^2(m \cdot \sigma)$ then $$\cP f(x)=\langle f, K(x,\cdot)\rangle_{L^2(m \cdot \sigma)}=f(x) \qquad \forall x \in T,$$ so $\cP$ is an orthogonal projection.
We now discuss the $L^p(m\cdot \sigma)$ boundedness of $\cP$ for $p \in [1,\infty)$.
\begin{theorem}\label{mainT} $\cP$ is bounded on $L^p(m\cdot \sigma)$ for every $p \in (1,\infty)$ and it is bounded from $L^1(m \cdot \sigma)$ to $L^{1,\infty}(m \cdot \sigma).$

\end{theorem}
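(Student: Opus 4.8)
The plan is to verify the Calderón--Zygmund hypotheses for the kernel $K(x,y)$ with respect to the nondoubling metric measure space $(T, d, m\cdot\sigma)$, and then invoke the machinery of \cite{LMP,CAP,CDMRV}. Since $\cP$ is already known to be bounded on $L^2(m\cdot\sigma)$ (being an orthogonal projection), it suffices to establish a Hörmander-type integral condition on $K$ adapted to the dyadic structure of the tree; this yields weak type $(1,1)$, and the range $p\in(1,2)$ follows by interpolation with $L^2$ while $p\in(2,\infty)$ follows by duality, using that $K$ is symmetric (Remark \ref{rem:Psi}) and $m\cdot\sigma$ is symmetric in the two variables, so $\cP=\cP^\ast$. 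First I would recall the explicit series representation \eqref{def:K} together with the pointwise description of $\Psi$ in \eqref{Psi}: for fixed $x$, the $n$-th term is supported on $\dot T_{p^{n+1}(x)}$ and is constant on each $T_y$ with $y\in s(p^{n+1}(x))$, taking value $-1/m(p^{n+1}(x))$ off the sector $T_{p^n(x)}$ and $1/m(p^n(x))-1/m(p^{n+1}(x))$ on it. The crucial consequence, already extracted inside the proof of Proposition \ref{prop:convergence}, is the size bound
\begin{align}\label{sizebound}
\bigg\|\frac{\Psi(p^{j+1}(x),p^j(x),\cdot)}{S_{j,x}(\sigma)}\bigg\|_{L^2(m\cdot\sigma)}^2\le \frac{C}{S_{j,x}(\sigma)\,m(p^j(x))}.
\end{align}

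Next I would set up the appropriate Calderón--Zygmund constants. The natural ``balls'' here are the sectors $T_x$; the key geometric fact is that for $x,y$ with confluent $c=x\wedge y$ at level $\ell(c)$, the tail of the series \eqref{def:K} beyond the index $n_0$ with $p^{n_0+1}(x)=c$ coincides with the symmetric expression in Remark \ref{rem:Psi}, while the first $n_0$ terms are each constant on $T_c$. Thus I would prove a Hörmander condition of the form: for every $x$ and every $x'\in T_x$ (i.e. $x'$ below $x$),
\begin{align}\label{hormander}
\sum_{y\notin T_x}\big|K(x,y)-K(x',y)\big|\,m(y)\sigma(y)\le C,
\end{align}
with $C$ independent of $x,x'$, and symmetrically with the roles of first/second variable exchanged (which is automatic by symmetry of $K$). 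The point is that $K(x,y)-K(x',y)$ only involves the finitely many terms of index $n<n_0$ where $x$ and $x'$ see $p^n$ differently, plus a telescoping correction; combining the support and constancy properties of $\Psi$ with Remark \ref{rem:carl} (which controls $m\cdot\sigma(T_z)=m(z)\sum_{k\le\ell(z)}\sigma(k)$) and Assumption \hyperref[assumption:ii]{ii)} makes each such contribution summable. I would organize this as: (a) reduce to $x'\in s(x)$ by iterating; (b) for $x'\in s(x)$, expand $K(x,\cdot)-K(x',\cdot)$ using \eqref{def:K} and note that the difference is supported (up to a constant multiple of $\mathbf 1_{T_{p(x)}}$-type terms) on $T_{p(x)}\setminus T_x$ together with $T_x$, with the $T_x$ part lying inside the ``ball'', and estimate the exterior part against $m\cdot\sigma(T_{p^{n}(x)})/[S_{n,x}(\sigma)m(p^n(x))]$, summing a geometric-type series controlled by \eqref{assumptionii1}.

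The main obstacle I anticipate is twofold. First, the measure $m\cdot\sigma$ is genuinely nondoubling (the sectors $T_x$ can have wildly varying mass since $m$ is only a flow measure with no lower bound on branching ratios $m(y)/m(x)$), so I cannot use classical Coifman--Weiss theory and must instead route everything through the dyadic/Christ-type construction of \cite{CDMRV}, where the tree sectors themselves serve as the dyadic cubes — I would need to check that their abstract theorem applies with the measure $m\cdot\sigma$ and verify the (mild) regularity they require, e.g. that $m\cdot\sigma(T_x)<\infty$ for all $x$ (this is Assumption \hyperref[assumption:i]{i)} via Remark \ref{rem:carl}) and that $\{o\}$-centered sectors exhaust $T$. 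Second, and more delicate, is the transition from the $L^2$-type size estimate \eqref{sizebound} to the $L^1$ integral bounds \eqref{hormander}: one must pass from square-summability of the $\Psi$-terms to absolute summability, which is exactly Lemma \ref{lem:ort}(a), and then track constants carefully through the confluent decomposition so that the bound in \eqref{hormander} is uniform. Once \eqref{hormander} and its dual are in hand, the CZ theorem of \cite{CDMRV} gives weak type $(1,1)$; Marcinkiewicz interpolation with the $L^2$ bound gives $p\in(1,2]$, and duality (using $\cP=\cP^\ast$ on $L^2(m\cdot\sigma)$ and self-adjointness with respect to the symmetric kernel) upgrades to all $p\in(1,\infty)$, completing the proof.
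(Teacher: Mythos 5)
Your overall strategy matches the paper's: reduce to weak type $(1,1)$ via the nondoubling dyadic Calder\'on--Zygmund theorem of \cite{CDMRV}, then interpolate with $L^2$ and dualise using the symmetry of $K$. However, there is a genuine gap: \cite[Theorem C]{CDMRV} requires \emph{two} hypotheses on the kernel, a H\"ormander condition \emph{and} a size condition on the ring $T_{p(u)}\setminus T_u$ around each dyadic cube $T_u$, namely
\begin{align*}
 \sup_{u \in T}\ \sup_{x \in T_u}\ \sum_{z \in T_{p(u)}\setminus T_u} |K(x,z)|\, m(z)\sigma(z)<\infty .
\end{align*}
This second condition is not a ``mild regularity'' requirement that comes for free in the nondoubling setting: it is precisely what replaces the doubling property in controlling the bad part of the CZ decomposition on the dilated cube minus the cube, and it is where all the analytic work of the proof actually lies. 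Verifying it requires the telescoping estimate $|K(x,z)|\le 2/\bigl(S_{n_0,x}(\sigma)\,m(p(u))\bigr)$ for $z\in \dot{T}_{p(u)}\setminus T_u$ (using that $n\mapsto S_{n,x}(\sigma)$ is nondecreasing and that $1/(S_{n,x}(\sigma)m(p^n(x)))\to 0$ by Assumption ii)), combined with the computation $m\cdot\sigma(\dot{T}_{p(u)})=m(p(u))\sum_{\ell\le \ell(u)}\sigma(\ell)=m(p(u))\,S_{n_0,x}(\sigma)$ from Remark \ref{rem:carl}. Your proposal never addresses this, so the invocation of Theorem C is incomplete.

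Conversely, you overestimate the difficulty of the H\"ormander condition. For $x,x'\in T_u$ and $z\in T\setminus T_u$, every term $\Psi(p^{n+1}(z),p^n(z),\cdot)$ is constant on the sector of $s(p^{n+1}(z))$ containing $T_u$ (or vanishes on it), so $K(z,x)=K(z,x')$ exactly and the H\"ormander sum is identically zero; no geometric series, no use of \eqref{assumptionii1}, and no reduction to $x'\in s(x)$ is needed there. In short: the part of the argument you plan in detail is trivial, and the part that carries the content of the proof is missing.
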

\begin{proof}
   It suffices to prove the weak type $(1,1)$ boundedness of $\cP$, then the result follows by interpolation with the trivial $L^2(m \cdot \sigma)$ boundedness of $\cP$ and by duality.

   In order to prove that $\cP : L^1(m \cdot \sigma) \to L^{1,\infty}(m \cdot \sigma)$ we shall make use of \cite[Theorem C]{CDMRV}. We first show that $K(\cdot,\cdot)$ satisfies a suitable H\"ormander integral condition. We shall prove that 
    \begin{align}\label{hormander}
        \sup_{u \in T} \sup_{x,y \in T_u} \sum_{z \in T \setminus T_u} |K(z,x)-K(z,y)| \ \sigma(z)m(z)<\infty.
    \end{align}
Indeed, it suffices to notice that if $x \in T_u$ for some $u \in T$, then for every $z \in T \setminus T_u$ 
    \begin{align*}
      \Psi(p^{n+1}(z),p^n(z),x)=\begin{cases}
            \frac{1}{m(p^n(z))}-\frac{1}{m(p^{n+1}(z))}&\text{if $x \in T_{p^n(z)}$,} \\ 
            -\frac{1}{m(p^{n+1}(z))} &\text{$x\in \dot{T}_{p^{n+1}(z)} \setminus T_{p^n(z)}$,} \\ 
            0 &\text{otherwise.}
        \end{cases} 
    \end{align*} Now we make the following observation: if $x,y \in T_u$ and $z \in T \setminus T_u$, then \begin{align*}x \in  T_{p^{n}(z)} &\iff y \in T_{p^{n}(z)}, \\ 
    x \in  \dot{T}_{p^{n+1}(z)} \setminus T_{p^{n}(z)} &\iff  y \in  \dot{T}_{p^{n+1}(z)} \setminus T_{p^{n}(z)}.
    \end{align*}Therefore it follows that for such a choice of $x,y$ and $z$ $$\Psi(p^{n+1}(z),p^n(z),x)=\Psi(p^{n+1}(z),p^n(z),y)$$ and hence $K(z,x)=K(z,y)$. This implies that H\"ormander's condition is trivially satisfied.

Next, we shall show that  the following {\it size} condition holds
    \begin{align}\label{size}
        \sup_{u \in T} \sup_{x \in T_u} \sum_{z \in T_{p(u)}\setminus T_u} |K(x,z)| \, m(z)\sigma(z)<\infty.
\end{align}
    Fix $u \in T$ and $x \in T_u$. Then, define $n_0$ as the integer such that $p^{n_0+1}(x)=p(u)$ and notice that for every $z \in \dot{T}_{p(u)} \setminus T_u$ we have that 
    \begin{align*}|K(x,z)| &\le \sum_{n=0} \frac{|\Psi(p^{n+1}(x),p^n(x),z)|}{S_{n,x}(\sigma)} \\  &\le  \sum_{\substack{n \ge n_0+1}} \frac{1}{S_{n,x}(\sigma)}\bigg(\frac{1}{m(p^n(x))}-\frac{1}{m(p^{n+1}(x))} \bigg)+\frac{1}{S_{n_0,x}(\sigma)m(p(u))} \\ 
&\le \sum_{\substack{n \ge n_0+1}} \bigg(\frac{1}{S_{n,x}(\sigma)m(p^n(x))}-\frac{1}{S_{n+1,x}(\sigma)m(p^{n+1}(x))} \bigg)+\frac{1}{S_{n_0,x}(\sigma)m(p(u))}
    \\
    &\le \frac{2}{S_{n_0,x}(\sigma)} \frac{1}{m(p(u))},
    \end{align*} where we have used that $ \frac{1}{S_{n,x}(\sigma)m(p^n(x))} \xrightarrow[\mathrm{as} \ n \to \infty]{} 0$ and that $$S_{n,x}(\sigma) \le S_{n+1,x}(\sigma) \qquad \forall x \in T, \forall n \in \mathbb Z.$$ Similarly, it is not hard to see that $$|K(x,p(u))|\le \frac{1}{S_{p(u)}(\sigma)m(p(u))}.$$ It follows that 
    \begin{align*}
       \sum_{z \in T_{p(u)}\setminus T_u} |K(x,z)| \, m(z)\sigma(z) &\le 1+\sum_{z \in \dot{T}_{p(u)}} \frac{2}{S_{n_0,x}(\sigma)} \frac{m(z)\sigma(z)}{m(p(u))} \\ 
       &=1+\sum_{ \ell\le \ell(u)} \frac{2}{S_{n_0,x}(\sigma)} \frac{m(p(u))\sigma(\ell)}{m(p(u))} \\ 
       &\le 3,
    \end{align*} because $S_{n_0,x}(\sigma)=\sum_{\ell \le \ell(u)} \sigma(\ell)$.  This proves \eqref{size} holds and the result now follows by invoking \cite[Theorem C]{CDMRV}.
\end{proof}
\begin{remark}
 Theorem \ref{mainT}   shows that $\cP : H^1(T) \to L^1(T)$ and $\cP : L^\infty(T) \to BMO(T)$, where $H^1(T)$ and $BMO(T)$ are the martingale Hardy and BMO adapted to the natural filtration induced by the Gromov metric, see \cite[Section 1]{CDMRV}.
\end{remark}
\begin{remark}[Rooted trees]
In this paper, we consider trees with the root at infinity, whereas in the literature, \textit{rooted} trees (i.e., trees with a vertex as the root) are often studied. We briefly explain how our results can be easily adapted to this different case. Assume that $T_o$ is a rooted tree, $m$ is a flow measure on $T_o$ and $\sigma \in \ell^1(\mathbb N)$. A key difference between rooted trees and trees with root at infinity is that in the former case, the space $B^2(m \cdot \sigma)$  contains constant functions as $m \cdot \sigma$ is finite on $T_o$; this implies that the structure of the Bergman projection is slightly different. It is straightforward to verify that the function $K_o : T_o \times T_o \to \mathbb C$ defined by  $$K_o(x,y)=\mathfrak{K}_o+\sum_{n=0}^{|x|-1} \frac{\Psi(p^{n+1}(x),p^n(x),y)}{S_{n,x}(\sigma)}$$ is the kernel of the Bergman projection in this setting; here $\mathfrak{K}_o=\frac{1}{m\cdot \sigma(T_o)}$,  $|x|=d(x,o)$, $\Psi$ is defined as in \eqref{Psi}, and $S_{n,x}(\sigma)$ as in \eqref{cn}, and where the sum above is zero when $x=o$. Also, note that in contrast to what happens in the case of trees with root at infinity, no additional assumptions on $m$ are required.
Notice that when $T_o$ is a radial rooted tree such that $x \in T_o$ has $q(|x|)+1$ neighbours for some $q : \mathbb N \to  \mathbb N$ with $q \ge 2$, then the function $m_c(x):=\prod_{\ell=0}^{|x|-1}q(\ell)^{-1}$ is a flow measure, indeed 
\begin{align*}
    \sum_{y \in s(x) }m_c(y)=q(|x|) \prod_{\ell=0}^{|x|}q(\ell)^{-1}=m_c(x) \qquad \forall x \in T_o.
\end{align*} We call such $m_c$ {\it canonical} flow measure. 
Thus if for some $\alpha>1$ we define $$\sigma_\alpha(n):=\prod_{\ell=0}^{n-1}q(\ell)^{-\alpha}\bigg(\prod_{\ell=0}^{n-1}q(\ell)^{-1}\bigg)^{-1}=\prod_{\ell=0}^{n-1} q^{1-\alpha}(\ell) \le 2^{n(1-\alpha)},$$
then $\sum_{n=0}^\infty \sigma_\alpha(n)<\infty$. With a slight abuse of notation, we denote by $\sigma_\alpha$ the function on $T$ defined by $\sigma_\alpha(x)=\sigma_\alpha(|x|)$. The measure $\mu_\alpha=m_c \cdot \sigma_\alpha$ is the one considered in \cite{CDMRV}. 
\end{remark}
\section{Exponential measures and integral operators}\label{Sec:3}
Let $T$ denote a tree with root at infinity. In analogy with the analysis performed in \cite[Chapter 3.4]{Zhu} and in \cite[Section 3]{DMV}, we shall study the weighted $L^p$-boundedness of integral operators whose kernel is given by the product of a weight and the Bergman kernel.

We define the two classes of operators
\begin{align*}
    \cU_{a,b,c} f(x)&=m(x)\sigma_a(x)\sum_{y \in T} K_c(x,y) f(y)m(y)\sigma_b(y),\\
    \cV_{a,b,c} f(x)&=m(x)\sigma_a(x)\sum_{y \in T} |K_c(x,y)| f(y)m(y)\sigma_b(y),
\end{align*}
where $\sigma_a, \sigma_b$, and $\sigma_c$ are weights on $\mathbb Z$ and $K_c$ denotes the kernel of the Bergman projection on $L^2(m \cdot \sigma_c)$. We assume that $m$ and $\sigma_c$ satisfy Assumptions \hyperref[assumption:i]{i)} and \hyperref[assumption:ii]{ii)}, ensuring that the theory developed in the previous section applies to $K_c$. We are interested in the study of the $L^p(m\cdot \sigma_d)$ boundedness of such operators in terms of a weight $\sigma_d$ on $\mathbb Z$. 

For this purpose, we need precise estimates of $K_c$ that we obtain under some additional assumptions. From now on, we require that every vertex in $T$ has at least two successors and that $m$ satisfies the following condition
\begin{align}\label{locdoub}
    \sup_{x \in T} \frac{m(p(x))}{m(x)}=:C_m<\infty.
\end{align} 
Notice that this assumption is equivalent to the fact that the metric measure space $(T,d,m)$ is locally doubling \cite[Proposition 2.2]{LSTV}  and implies that the number of neighbours of a vertex is bounded on $T$; here $d$ is the usual graph distance on $T$. It is known \cite[Proposition 2.2]{LSTV} that \eqref{locdoub} implies that 
\begin{align}\label{equiv}
    \sup_{x \in T} \frac{m(x)}{m(p(x))}=:D_m<1.
\end{align}
 
\begin{lemma}\label{lemexp2} Assume that  $k>(D_m)^{p-1}$ for some $p\ge 1$. Then, 
\begin{align*}
    \sum_{x \in T_y} k^{\ell(x)}m(x)^p \approx k^{\ell(y)}m(y)^p.
\end{align*}
\end{lemma}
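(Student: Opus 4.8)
The plan is to establish the two-sided estimate by bounding the sum $\sum_{x \in T_y} k^{\ell(x)} m(x)^p$ from above and below in terms of $k^{\ell(y)} m(y)^p$, organising the sum over $T_y$ according to the level, i.e. writing
\begin{align*}
\sum_{x \in T_y} k^{\ell(x)} m(x)^p = \sum_{n=0}^\infty k^{\ell(y)-n} \sum_{x \in s_n(y)} m(x)^p.
\end{align*}
The lower bound is immediate: keeping only the term $n=0$ (the single vertex $x=y$) already gives $\sum_{x \in T_y} k^{\ell(x)} m(x)^p \ge k^{\ell(y)} m(y)^p$, so the content is entirely in the upper bound.

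For the upper bound, the key is to control $\sum_{x \in s_n(y)} m(x)^p$ in terms of $m(y)^p$. Since $m$ is a flow measure, $\sum_{x \in s(z)} m(x) = m(z)$ for every $z$, and since $p \ge 1$ we have $\sum_{x \in s(z)} m(x)^p \le \big(\sum_{x \in s(z)} m(x)\big) \cdot \sup_{x \in s(z)} m(x)^{p-1} = m(z) \sup_{x \in s(z)} m(x)^{p-1}$. Now \eqref{equiv} gives $m(x) \le D_m\, m(p(x))$ for $x \in s(z)$, i.e. $m(x)^{p-1} \le D_m^{p-1} m(z)^{p-1}$ (using $p-1 \ge 0$), so $\sum_{x \in s(z)} m(x)^p \le D_m^{p-1} m(z)^p$. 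Iterating this $n$ times yields $\sum_{x \in s_n(y)} m(x)^p \le (D_m^{p-1})^n m(y)^p$. Substituting back,
\begin{align*}
\sum_{x \in T_y} k^{\ell(x)} m(x)^p \le k^{\ell(y)} m(y)^p \sum_{n=0}^\infty k^{-n} (D_m^{p-1})^n = k^{\ell(y)} m(y)^p \sum_{n=0}^\infty \Big(\frac{D_m^{p-1}}{k}\Big)^n,
\end{align*}
and the geometric series converges precisely because $k > D_m^{p-1}$, with sum $\big(1 - D_m^{p-1}/k\big)^{-1}$, a constant independent of $y$. This gives the upper bound with implicit constant $C = \big(1-D_m^{p-1}/k\big)^{-1}$, completing the proof.

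I do not expect any serious obstacle here; the only point requiring a little care is the direction of the inequality for $m^{p-1}$, namely that $p-1 \ge 0$ is needed so that $m(x) \le D_m m(p(x))$ implies $m(x)^{p-1} \le D_m^{p-1} m(p(x))^{p-1}$ (the case $p=1$ is trivial since then the sum is just $\sum_{x\in T_y} k^{\ell(x)} m(x)$ and the flow property collapses it to a plain geometric series in $k$). One should also note that $D_m < 1$ by \eqref{equiv}, so the hypothesis $k > D_m^{p-1}$ is automatically satisfied for all $k \ge 1$ when $p \ge 1$; the hypothesis only bites for $k < 1$, which is the relevant regime for the exponential measures $\sigma_\alpha$ with $\alpha > 1$ studied in this section.
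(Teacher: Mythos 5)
Your proof is correct and follows essentially the same route as the paper: the lower bound by keeping only $x=y$, and the upper bound by combining $m(x)\le D_m^{\,n}m(y)$ for $x\in s_n(y)$ (raised to the power $p-1$) with the flow identity $\sum_{x\in s_n(y)}m(x)=m(y)$, reducing everything to a geometric series with ratio $D_m^{p-1}/k<1$. (Only a cosmetic quibble with your closing aside: for $p=1$ the hypothesis reads $k>1$, so it is not automatic at $k=1$; this does not affect the proof.)
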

\begin{proof} The inequality $\ge$ is trivial. For the reverse one, observe that
\begin{align*}
    m(x)^{p-1} \le D_m^{(\ell(y)-\ell(x))(p-1)} m(y)^{p-1} \qquad \forall x \in T_y.
\end{align*} Then, 
\begin{align*}
     \sum_{x \in T_y} k^{\ell(x)}m(x)^p &\le m(y)^{p-1}D_m^{\ell(y)(p-1)}\sum_{x \in T_y}  \bigg(\frac{k}{D_m^{p-1}}\bigg)^{\ell(x)}m(x) \\ 
     &=m(y)^{p}D_m^{\ell(y)(p-1)}\sum_{n=0}^{\infty}  \bigg(\frac{k}{D_m^{p-1}}\bigg)^{(\ell(y)-n)} \\ 
     &\le Cm(y)^{p}k^{\ell(y)}.
\end{align*}
\end{proof}
\begin{remark}
    Notice that when $p=1$ in Lemma \ref{lemexp2}, the locally doubling assumption is not necessary. Similarly, if $k>1$, one can repeat the proof without any assumption on the flow measure $m$.
\end{remark}

 We now show that when $\sigma_c$ is an exponential function, it is possible to obtain a sharp estimate of the kernel of the Bergman projection. We point out that Assumption \hyperref[assumption:ii]{ii)} is fulfilled by $m\cdot \sigma_c$ when $m$ is locally doubling and $\sigma_c$ is an exponential function with base greater than $1$. 
\begin{proposition}
    Assume that there exists a constant $k_c > 1$ such that  
    $$  
    \sigma_c(n)= k_c^n, \quad \forall n \in \mathbb{Z}.  
    $$  
    Then, the Bergman kernel $K_c$ in $L^2(m \cdot \sigma_c)$ satisfies  
    \begin{equation}\label{estkc}
    |K_c(x,z)|\approx \frac{1}{\sigma_c(x \wedge z) m(x\wedge z)}, \quad \forall x,z \in T.
    \end{equation}
\end{proposition}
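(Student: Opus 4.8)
The plan is to start from the series defining $K_c$ and use the explicit formula for $\Psi$ together with the exponential form $\sigma_c(n)=k_c^n$ to compute $S_{n,x}(\sigma_c)$ in closed form. Since $k_c>1$, we have
\begin{align*}
S_{n,x}(\sigma_c)=\sum_{j=0}^\infty k_c^{\ell(x)+n-j}=\frac{k_c}{k_c-1}\,k_c^{\ell(x)+n}=\frac{k_c}{k_c-1}\,\sigma_c(p^n(x)),
\end{align*}
so $S_{n,x}(\sigma_c)\approx \sigma_c(p^n(x))$ with constants depending only on $k_c$. This reduces \eqref{estkc} to showing $|K_c(x,z)|\approx \tfrac{1}{\sigma_c(x\wedge z)m(x\wedge z)}$ directly from \eqref{def:K}.

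Next I would split into the two cases already isolated in Remark \ref{rem:Psi}. When $x$ and $z$ are not one above the other, \eqref{xnotabovey} gives
\begin{align*}
K_c(x,z)=\sum_{n\ge 0}\frac{1}{S_{n,x\wedge z}(\sigma_c)}\Big(\frac{1}{m(p^n(x\wedge z))}-\frac{1}{m(p^{n+1}(x\wedge z))}\Big)-\frac{1}{S_{p^{n_0}(x)}(\sigma_c)\,m(x\wedge z)},
\end{align*}
where $p^{n_0+1}(x)=x\wedge z$. For the upper bound I would bound each term in the telescoping-type series by $\tfrac{C}{\sigma_c(p^n(x\wedge z))m(p^n(x\wedge z))}$ (using \eqref{equiv} to discard the subtracted term), and then sum the geometric-type series $\sum_n (k_c D_m)^{-n}\cdot\tfrac{1}{\sigma_c(x\wedge z)m(x\wedge z)}$, which converges since $k_c D_m$ need not exceed $1$ — here one must be slightly careful: the relevant ratio is $\tfrac{1}{k_c}\cdot\tfrac{m(p^{n+1})}{m(p^n)}\le \tfrac{1}{k_c}$ by \eqref{equiv} (or rather by \eqref{locdoub} reversed), wait — the correct bound uses $m(p^n(x\wedge z))\le D_m^{?}$; I would arrange the estimate as $\tfrac{1}{\sigma_c(p^n)m(p^n)}=\tfrac{1}{k_c^n\sigma_c(x\wedge z)m(p^n(x\wedge z))}$ and note $m(p^n(x\wedge z))\ge m(x\wedge z)$ (since $m$ decreases toward the boundary, $m(p(x))\ge m(x)$), so each term is at most $\tfrac{1}{k_c^n\sigma_c(x\wedge z)m(x\wedge z)}$, giving a convergent geometric sum. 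For the lower bound, I would keep only the $n=0$ term of the positive series, which equals $\tfrac{1}{S_{x\wedge z}(\sigma_c)}\big(\tfrac{1}{m(x\wedge z)}-\tfrac{1}{m(p(x\wedge z))}\big)$, and combine it with the negative term; using \eqref{equiv} one sees $\tfrac{1}{m(x\wedge z)}-\tfrac{1}{m(p(x\wedge z))}\ge (1-D_m)\tfrac{1}{m(x\wedge z)}$ is bounded below by a positive multiple of $\tfrac{1}{m(x\wedge z)}$, and the negative term has size $\tfrac{1}{S_{n_0,x}(\sigma_c)m(x\wedge z)}\approx \tfrac{1}{k_c\sigma_c(x\wedge z)m(x\wedge z)}$, which is strictly smaller than the main term when $k_c>1$ — so the positive part dominates and we get the matching lower bound $|K_c(x,z)|\gtrsim \tfrac{1}{\sigma_c(x\wedge z)m(x\wedge z)}$. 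The case where $x$ and $z$ are one above the other is handled the same way but without the negative correction term, so it is immediate.

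The main obstacle is the lower bound in the non-comparable case: one has to verify that the positive series genuinely dominates the single negative term, i.e., that the cancellation between $\sum_{n\ge 0}\tfrac{1}{S_{n,x\wedge z}}\big(\tfrac{1}{m(p^n)}-\tfrac{1}{m(p^{n+1})}\big)$ and $\tfrac{1}{S_{n_0,x}(\sigma_c)m(x\wedge z)}$ does not produce cancellation down to a smaller order. This is exactly where $k_c>1$ is essential: it forces $S_{n_0,x}(\sigma_c)=S_{p^{n_0}(x)}(\sigma_c)\approx k_c^{\ell(x\wedge z)-1}$ to be a factor $k_c$ larger than the "diagonal" scale $k_c^{\ell(x\wedge z)}$... no — one checks $S_{p^{n_0}(x)}(\sigma_c)=\tfrac{k_c}{k_c-1}k_c^{\ell(x\wedge z)-1}$ while $S_{x\wedge z}(\sigma_c)=\tfrac{k_c}{k_c-1}k_c^{\ell(x\wedge z)}$, so the negative term is exactly $\tfrac{1}{k_c}$ times the leading positive term, hence for $k_c>1$ their difference is a positive multiple of $\tfrac{1}{\sigma_c(x\wedge z)m(x\wedge z)}$. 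Once this quantitative comparison is pinned down, the result follows, and I would record explicitly that all implied constants depend only on $k_c$ and $D_m$ (equivalently $C_m$).
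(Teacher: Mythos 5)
Your reduction $S_{n,x}(\sigma_c)=\tfrac{k_c}{k_c-1}k_c^{\ell(x)+n}$, the upper bound, and the case where $x$ and $z$ are one above the other all match the paper's argument and are fine. But the lower bound in the non-comparable case contains a genuine error: you have the domination backwards. Since $S_{p^{n_0}(x)}(\sigma_c)=k_c^{-1}S_{x\wedge z}(\sigma_c)$, the negative term has magnitude
\[
\frac{1}{S_{p^{n_0}(x)}(\sigma_c)\,m(x\wedge z)}=\frac{k_c}{S_{x\wedge z}(\sigma_c)\,m(x\wedge z)},
\]
which is $k_c$ \emph{times larger} than $\tfrac{1}{S_{x\wedge z}(\sigma_c)m(x\wedge z)}$, not $k_c$ times smaller as you assert ("the negative term is exactly $\tfrac1{k_c}$ times the leading positive term"). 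Meanwhile the \emph{entire} positive telescoping series is bounded above by
\[
\frac{1}{S_{x\wedge z}(\sigma_c)}\sum_{n\ge0}\Bigl(\frac{1}{m(p^n(x\wedge z))}-\frac{1}{m(p^{n+1}(x\wedge z))}\Bigr)=\frac{1}{S_{x\wedge z}(\sigma_c)\,m(x\wedge z)},
\]
using $S_{n,x\wedge z}\ge S_{0,x\wedge z}$ and $m(p^n)\to\infty$. So the positive part cannot dominate the negative term; in fact $K_c(x,z)<0$ here, and your claimed inequality "the positive part dominates" is false. The conclusion $|K_c(x,z)|\gtrsim \tfrac{1}{\sigma_c(x\wedge z)m(x\wedge z)}$ is still correct, but for the opposite reason: one bounds $|K_c(x,z)|$ below by (negative term) $-$ (positive series) $\ge \bigl(\tfrac{1}{S_{-1,x\wedge z}(\sigma_c)}-\tfrac{1}{S_{x\wedge z}(\sigma_c)}\bigr)\tfrac{1}{m(x\wedge z)}=\tfrac{\sigma_c(x\wedge z)}{S_{-1,x\wedge z}(\sigma_c)S_{x\wedge z}(\sigma_c)}\cdot\tfrac{1}{m(x\wedge z)}\approx\tfrac{1}{\sigma_c(x\wedge z)m(x\wedge z)}$, which is exactly the paper's argument. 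Keeping only the $n=0$ positive term, as you propose, cannot work since that single term is itself majorized by the negative one. Once you flip the direction of the comparison, the rest of your write-up goes through.
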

\begin{proof}
 
 Recall that $S_{n,x}(\sigma_c)=\sum_{k \le \ell(x)+n}\sigma_c(k)$ for every $n \in \mathbb Z$ and observe that $S_{n,x}(\sigma_c) \approx \sigma_c(\ell(x)+n)$.
By arguing much as in Remark \ref{rem:Psi} and using that $\sigma_c(n) \approx \sigma_c(n-1)$, we have the estimate 
$$|K_c(x,z)|\le C \frac{1}{\sigma_c(x\wedge z)m(x\wedge z)}.$$ For the reverse inequality notice that if $x$ and $z$ are one above the other then 
\begin{align*}
    |K_c(x,z)|&=\sum_{n=0}^\infty \frac{1}{S_{n,x\wedge z}(\sigma_c)}\bigg(\frac{1}{m(p^n(x\wedge z))}-\frac{1}{m(p^{n+1}(x\wedge z))}\bigg) \\ 
    &\ge  \frac{1}{S_{x\wedge z}(\sigma_c)}\bigg(\frac{1}{m(x\wedge z))}-\frac{1}{m(p(x\wedge z))}\bigg) \\ 
    &\ge \frac{1-D_m}{S_{x\wedge z}(\sigma_c)m(x\wedge z)}.
\end{align*}
 Otherwise, if $x$ and $z$ are not one above the other, recall that, since $m$ is locally doubling, $m(p^n(x)) \to \infty$ as $n \to \infty$ for every $x \in T$. Thus, we can estimate $|K_c(x, z)|$ from below by

\begin{align*} &\frac{1}{S_{-1,x\wedge z}(\sigma_c)m(x\wedge z)}-\sum_{n \ge 0} \frac{1}{S_{n,x\wedge z}(\sigma_c)}\bigg(\frac{1}{m(p^n(x\wedge z))}-\frac{1}{m(p^{n+1}(x\wedge z))}\bigg) \\ &\ge \frac{1}{S_{-1,x\wedge z}(\sigma_c)m(x\wedge z)}-\frac{1}{S_{x\wedge z}(\sigma_c)}\sum_{n \ge 0} \bigg(\frac{1}{m(p^n(x\wedge z))}-\frac{1}{m(p^{n+1}(x\wedge z))}\bigg)  \\ 
&= \frac{1}{S_{-1,x\wedge z}(\sigma_c)m(x\wedge z)}-\frac{1}{S_{x\wedge z}(\sigma_c)m(x\wedge z)},
\end{align*}
and
\begin{align*}
    \frac{1}{S_{-1,x\wedge z}(\sigma_c)}-\frac{1}{S_{x\wedge z}(\sigma_c)}=\frac{\sigma_c(x\wedge z)}{S_{-1,x\wedge z}(\sigma_c)S_{x\wedge z}(\sigma_c)} \approx \frac{1}{S_{-1,x\wedge z}(\sigma_c)} \ge \frac{1}{S_{x\wedge z}(\sigma_c)}. 
\end{align*}
Since $ S_{x\wedge z}(\sigma_c) \approx \sigma_c(x\wedge z)$, \eqref{estkc} follows.    
\end{proof}

From now on we assume that    $\sigma_{i}(n)=k_i^n$ for $i=a,b,c,d$, and that  $k_c>1$ so that sharp estimates of the Bergman kernel $K_c(\cdot,\cdot)$ are available. We shall prove necessary and sufficient conditions for the $L^p(m \cdot \sigma_d)$ boundedness of $\cU_{a,b,c}$ and $\cV_{a,b,c}$ and show that these conditions coincide in the particular case of a homogeneous tree. %

We begin by proving necessary conditions for the boundedness of $\cU_{a,b,c}$ (and therefore for $\cV_{a,b,c}$).
\begin{proposition}\label{prop:nec}
    Assume $\cU_{a,b,c}$ is bounded on $L^p(m \cdot \sigma_d)$ for some $p \in [1,\infty)$. Then, the following hold: 
    \begin{itemize}
        \item[\label{itm:one} i)] $m(x) \le C \bigg(\frac{k_c}{k_b k_a}\bigg)^{\ell(x)}$;
        \item[\label{itm:two} ii)]  $\frac{1}{k_aC_m}<k_d^{1/p}<k_b.$
    \end{itemize}
\end{proposition}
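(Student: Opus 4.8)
The plan is to establish each inequality by testing $\cU_{a,b,c}$ on explicit functions, relying on the two-sided bound \eqref{estkc} together with the fact—visible from the formula in Remark \ref{rem:Psi}—that $K_c(x,z)>0$ whenever $x$ and $z$ are one above the other (in that case $K_c(x,z)=\sum_{n\ge 0}S_{n,x\wedge z}(\sigma_c)^{-1}\big(m(p^n(x\wedge z))^{-1}-m(p^{n+1}(x\wedge z))^{-1}\big)$, a series of positive terms, since $m$ is strictly increasing along $p$). I will also use that \eqref{locdoub} forces $|s(x)|\le C_m$ for every vertex $x$ (the least massive successor $u$ of $x$ obeys $|s(x)|\le m(x)/m(u)=m(p(u))/m(u)\le C_m$), hence $|s_n(y)|\le C_m^n$ for all $y$ and $n$. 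For part i) the idea is a diagonal test: apply $\cU_{a,b,c}$ to a Dirac mass $\delta_z$ and evaluate at $z$. Since $z\wedge z=z$, \eqref{estkc} gives $\cU_{a,b,c}\delta_z(z)=m(z)^2\sigma_a(z)\sigma_b(z)K_c(z,z)\approx m(z)\,\sigma_a(z)\sigma_b(z)/\sigma_c(z)$, while $\|\delta_z\|_{L^p(m\cdot\sigma_d)}=(m(z)\sigma_d(z))^{1/p}$ and $\|\cU_{a,b,c}\delta_z\|_{L^p(m\cdot\sigma_d)}\ge |\cU_{a,b,c}\delta_z(z)|(m(z)\sigma_d(z))^{1/p}$; boundedness then yields $m(z)\sigma_a(z)\sigma_b(z)/\sigma_c(z)\le C$ uniformly in $z$, which is i).

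For the lower bound in ii) I would test again on $\delta_w$, but now estimate the whole norm over the sector $T_w$. For $x\in T_w$ one has $x\wedge w=w$, so by the lower part of \eqref{estkc} and positivity $|\cU_{a,b,c}\delta_w(x)|\gtrsim m(x)\sigma_a(x)\,\sigma_b(w)/\sigma_c(w)$, whence
\begin{align*}
\|\cU_{a,b,c}\delta_w\|_{L^p(m\cdot\sigma_d)}^p\gtrsim\Big(\frac{\sigma_b(w)}{\sigma_c(w)}\Big)^{p}\sum_{x\in T_w}(k_a^pk_d)^{\ell(x)}m(x)^{p+1}.
\end{align*}
By the power mean inequality and the branching bound, $\sum_{x\in s_n(w)}m(x)^{p+1}\ge m(w)^{p+1}|s_n(w)|^{-p}\ge m(w)^{p+1}C_m^{-np}$, so the last series dominates $m(w)^{p+1}(k_a^pk_d)^{\ell(w)}\sum_{n\ge 0}(k_a^pk_dC_m^p)^{-n}$, which diverges when $k_a^pk_dC_m^p\le1$. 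As $\delta_w\in L^p(m\cdot\sigma_d)$, boundedness forces $k_a^pk_dC_m^p>1$, i.e. $k_d^{1/p}>1/(k_aC_m)$.

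For the upper bound in ii) with $p>1$ the naive choice $\mathbf 1_{T_{x_0}}$ is unavailable (it need not lie in $L^p(m\cdot\sigma_d)$, and the series defining $\cU_{a,b,c}$ need not converge on general $L^p$ functions), so the plan is to use finitely supported functions that saturate duality. Writing $h=\sigma_b/\sigma_d$ and $\mu=m\cdot\sigma_d$, fix $x_0$ and test on $f_B=h^{p'-1}\mathbf 1_B$ for finite $B\subset T_{x_0}$; every $y\in B$ lies below $x_0$, so $x_0\wedge y=x_0$, $K_c(x_0,y)>0$ and $K_c(x_0,y)\gtrsim 1/(\sigma_c(x_0)m(x_0))$. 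No cancellation occurs, and $\cU_{a,b,c}f_B(x_0)\gtrsim\frac{\sigma_a(x_0)}{\sigma_c(x_0)}\sum_{y\in B}h(y)^{p'-1}m(y)\sigma_b(y)=\frac{\sigma_a(x_0)}{\sigma_c(x_0)}\|h\mathbf 1_B\|_{L^{p'}(\mu)}^{p'}$, while $\|f_B\|_{L^p(\mu)}^p=\|h\mathbf 1_B\|_{L^{p'}(\mu)}^{p'}$. Comparing $L^p(\mu)$-norms and letting $B\uparrow T_{x_0}$ forces $h\in L^{p'}(T_{x_0},\mu)$, i.e. $\sum_{y\in T_{x_0}}(k_b^{p'}k_d^{1-p'})^{\ell(y)}m(y)<\infty$; summing over levels (using that $m$ is a flow measure), this geometric series converges only if $k_b^{p'}k_d^{1-p'}>1$, equivalently $k_d^{1/p}<k_b$. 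The case $p=1$ should be handled separately, e.g. by testing on $\delta_{z_N}$ with $z_N\in T_{x_0}$ at distance $N$ from $x_0$ and letting $N\to\infty$. The main obstacle is exactly this last inequality: since the naive sector test fails, one must pass through finitely supported functions saturating the $L^p$–$L^{p'}$ pairing on $T_{x_0}$, and it is essential to exploit the positivity of $K_c$ there to rule out cancellation in $\cU_{a,b,c}f_B(x_0)$; by contrast i) and the lower bound in ii) reduce to a one-point test and to the elementary series estimate with the bound $|s_n(w)|\le C_m^n$.
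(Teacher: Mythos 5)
Your proposal is correct and runs on the same engine as the paper's proof: the two-sided kernel estimate \eqref{estkc}, testing on Dirac masses, and a duality argument for the upper bound in \hyperref[itm:two]{ii)}. The differences are organisational rather than conceptual, and each one buys a small simplification. For \hyperref[itm:one]{i)} you use only the diagonal evaluation $|\cU_{a,b,c}\delta_z(z)|\le \|\cU_{a,b,c}\|$, which gives $m(z)(k_ak_b/k_c)^{\ell(z)}\le C$ in one line; the paper instead extracts i) as the $n=0$ contribution of the full sector sum. For the lower bound in \hyperref[itm:two]{ii)} you restrict the $L^p$ norm of $\cU_{a,b,c}\delta_w$ to $T_w$ and use the power-mean inequality together with $|s_n(w)|\le C_m^n$ (your derivation of $|s(x)|\le C_m$ from the flow property and \eqref{locdoub} is correct); the paper uses the pointwise bound $m(x)\ge m(p^n(z))C_m^{-(n+\ell(z)-\ell(x))}$ and the flow property, and both reduce to the same divergent geometric series with ratio $(C_m^pk_a^pk_d)^{-1}$. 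For the upper bound you saturate the $L^p$--$L^{p'}$ pairing directly with $f_B=h^{p'-1}\mathbf 1_B$, $h=\sigma_b/\sigma_d$, on finite subsets of a sector, correctly invoking the positivity of $K_c(x_0,\cdot)$ on $T_{x_0}$ (each summand in the formula of Remark \ref{rem:Psi} is positive because $D_m<1$) to rule out cancellation; the paper computes $\cU_{a,b,c}^*$ explicitly and tests it on $\delta_z$ in $L^{p'}$. Both routes end at the convergence of $\sum_{y\in T_{x_0}}(k_b^{p'}k_d^{1-p'})^{\ell(y)}m(y)$, hence $k_d^{1/p}<k_b$.

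One caveat, which you share with the paper's own (omitted) treatment: your suggested handling of $p=1$ for the upper bound, testing on $\delta_{z_N}$ with $z_N$ deep in $T_{x_0}$, yields $(k_b/k_d)^{\ell(x_0)-N}\le C$ for all $N$, hence only the non-strict inequality $k_d\le k_b$; the borderline case $k_d=k_b$ is not excluded by this test and would require a finer argument. Since the paper dismisses $p=1$ with ``a similar and easier argument'', this is not a defect of your proposal relative to the paper, but it is worth being aware that the strict inequality at the endpoint does not follow from the single-$\delta$ test alone.
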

\begin{proof} Let $\delta_z$ denote the Dirac delta centered at $z \in T$.
Observe that
\begin{align}\label{testdelta}|\cU_{a,b,c}\delta_z(x)|=m(x)\sigma_a(x) |K_c(x,z)|m(z)\sigma_b(z).\end{align} 

Combining \eqref{estkc} and \eqref{testdelta}, we conclude that
\begin{align*}
    |\cU_{a,b,c} \delta_z(x)| \approx k_a^{\ell(x)}\frac{m(x)m(z) k_c^{-\ell(x\wedge z)} k_b^{\ell(z)}}{m(x\wedge z)}.
\end{align*}
If $\cU_{a,b,c}$ is bounded on $L^p(m\cdot \sigma_d)$, then 
\begin{align}\label{ineq}
    \sum_{x \in T} \bigg(k_a^{\ell(x)} \frac{m(x)m(z) k_c^{-\ell(x\wedge z)} k_b^{\ell(z)}}{m(x\wedge z)}\bigg)^p m(x) k_d^{\ell(x)} \le C m(z)k_d^{\ell(z)}.
\end{align}
By setting $T_{p^{-1}(z)}:=\emptyset$, we observe that the left-hand side in \eqref{ineq} is equal to
\begin{align}\label{rosso1}
&\sum_{n=0}^\infty \sum_{x \in T_{p^n(z)}\setminus T_{p^{n-1}(z)}} \bigg(k_a^{\ell(x)} \frac{m(x)m(z) k_c^{-(\ell(z)+n)} k_b^{\ell(z)}}{m(p^n(z))}\bigg)^p m(x) k_d^{\ell(x)}=:I.
\end{align} 

Since for every $x \in T_{p^n(z)}$
$$
m(x)^p\geq \left( \frac{m(p^n(z))}{C_m^{n+\ell(z)-\ell(x)}}\right)^p,
$$
we see that
\begin{align}\label{rosso2}
\sum_{x \in T_{p^n(z)}\setminus T_{p^{n-1}(z)}} \!\!\! \!\! \!\!\!\!\! (k_a^pk_d)^{\ell(x)}m(x)^{p+1} \geq \left( \frac{m(p^n(z))}{C_m^{n+\ell(z)}}\right)^p \sum_{x \in T_{p^n(z)}\setminus T_{p^{n-1}(z)}}\!\!\! \!\! \!\!\!\!\!(k_a^pk_dC_m^p)^{\ell(x)}m(x).
\end{align}
Let $s_{-1}(p^{n-1}(z)):=\emptyset$ and for every nonnegative integer $k$ set $$E_{k,n}:=\begin{cases}s_k(p^n(z))\setminus s_{k-1}(p^{n-1}(z)) &\text{if $n \ge 1$,} \\ 
s_k(p^n(z)) &\text{if $n=0$.}\end{cases}$$ Then,
\begin{align}\label{estbelowtent}
\sum_{x \in T_{p^n(z)}\setminus T_{p^{n-1}(z)}} (k_a^pk_d)^{\ell(x)}C_m^{\ell(x)p}m(x) &= \sum_{k=0}^\infty \sum_{x\in E_{k,n}}\left(C_m^pk_a^pk_d\right)^{\ell(z)+n-k}m(x)
\end{align} and the right-hand side is equal to 
\begin{align}
\nonumber&\left(C_m^pk_a^pk_d\right)^{\ell(z)+n}\sum_{k=0}^\infty \left(C_m^pk_a^pk_d\right)^{-k}\times\begin{cases}
    m(p^{n}(z))-m(p^{n-1}(z)) &\text{if $n \ge 1$}, \\ 
    m(z) &\text{if $n=0$.}
\end{cases} \\ 
\label{rosso3}&\approx \left(C_m^pk_a^pk_d\right)^{\ell(z)+n}m(p^n(z))\sum_{k=0}^\infty \left(C_m^pk_a^pk_d\right)^{-k}.
\end{align}

The last series is not convergent unless 
$$
k_d^{1/p} >1/(C_mk_a),
$$
 which is the left inequality in   \hyperref[itm:two]{ii)}. Furthermore, putting together \eqref{rosso1}, \eqref{rosso2}, and \eqref{rosso3}, we get
\begin{align*}
    &I \ge  m(z)^p k_d^{\ell(z)}\bigg(\frac{k_bk_a}{k_c}\bigg)^{p\ell(z)}\sum_{n=0}^\infty \frac{k_c^{-np}k_a^{np}k_d^{n}}{m(p^n(z))^p} m(p^{n}(z))^{p+1} \\ 
&=m(z)^p k_d^{\ell(z)}\bigg(\frac{k_bk_a}{k_c}\bigg)^{p\ell(z)}\sum_{n=0}^\infty k_c^{-np}k_a^{np}k_d^{n}m(p^{n}(z)) \\ 
&=:II
\end{align*}

It is clear that
\begin{align*} 
II&\ge m(z)^{p+1}k_d^{\ell(z)}\bigg(\frac{k_bk_a}{k_c}\bigg)^{p\ell(z)}.
\end{align*}
It follows by \eqref{ineq} that
\begin{align*}{m(z)^{p+1}}k_d^{\ell(z)}\bigg(\frac{k_bk_a}{k_c}\bigg)^{p\ell(z)} &\le
Cm(z)k_d^{\ell(z)},
\end{align*}
 which is equivalent to \hyperref[itm:one]{i)}.

It remains to prove the second inequality in \hyperref[itm:two]{ii)}. It is easy to see that the adjoint operator of $\cU_{a,b,c}$ on $L^2(m \cdot \sigma_d)$ is defined by $$\cU_{a,b,c}^*f(x)=\bigg(\frac{k_b}{k_d}\bigg)^{\ell(x)}\sum_{y \in T}K_c(x,y)(k_ak_d)^{\ell(y)}f(y)m^2(y) \qquad \forall f \in L^2(m \cdot \sigma_d).$$  
It follows that for every $p \in [1,\infty)$ the operator $\cU_{a,b,c}$ is bounded on $L^p(m \cdot \sigma_d)$  only if $\cU_{a,b,c}^*$ is bounded on $L^{p'}(m \cdot \sigma_d)$ where $1/p+1/p'=1.$

By testing $\cU^*_{a,b,c}$ on $\delta_z$, we get that $\cU_{a,b,c}$ is bounded on $L^{p}(m\cdot \sigma_d)$ for $p \in [1,\infty)$ only if there exists a constant $C>0$ such that
\begin{align}\label{dual}\bigg[\sum_{x \in T}\bigg(\big(k_b/k_d\big)^{\ell(x)}|K_c(x,z)|(k_ak_d)^{\ell(z)}m^2(z)\bigg)^{p'}m(x) k_d^{\ell(x)}\bigg]^{1/p'} \le C \big(m(z)k_d^{\ell(z)}\big)^{1/p'}\end{align}
 with the obvious modification when $p=1$. Assume $p \in (1,\infty)$.
Notice that the left-hand side in \eqref{dual} is  comparable to
\begin{align*}
    \big(k_ak_d/k_c\big)^{\ell(z)}m^2(z)\bigg[\sum_{n=0}^\infty\frac{1}{k_c^{np'}} \sum_{x \in T_{p^n}(z) \setminus T_{p^{n-1}(z)}} \big(k_b/k_d)^{p'\ell(x)}k_d^{\ell(x)}m(x)\bigg]^{1/p'}.
\end{align*} By repeating the computations performed in \eqref{estbelowtent}, it is easy to see that the inner sum above is not convergent unless $(k_b/k_d)^{p'}k_d>1$, that is equivalent to $$k_b>k_d^{1/p},$$ providing the desired inequality. The case $p=1$ can be obtained by a similar and easier argument, so we omit the details.

\end{proof}

In the next proposition we collect sufficient conditions for the boundedness of $\cV_{a,b,c}$ (and hence for $\cU_{a,b,c})$.

\begin{proposition}\label{prop:suff} Let $p \in (1,\infty)$ and
    assume that 
    \begin{itemize}
        \item[\label{item:1}1)] $m(x) \le c \big(\frac{k_c}{k_bk_a}\big)^{\ell(x)}$ for every $x \in T$;
       % \item[2)]$k_c>(C_m D_m)^{3-1/p'}$;
        \item[\label{item:2}2)] $\frac{C_mD_m^{2-1/p'}k_b^{1/p'}}{k_a^{1/p}k_c^{1/p'}}\label{kd}<k_d^{1/p}<\frac{k_b^{1/p'}k_c^{1/p}}{C_m^{2-1/p'}D_m k_a^{1/p}}.$ 
    \end{itemize}
     Then, $\cV_{a,b,c}$ is bounded on $L^p(m \cdot \sigma_d)$.
\end{proposition}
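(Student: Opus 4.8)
The plan is to deduce everything from $\cV_{a,b,c}$: since $|\cU_{a,b,c}f(x)|\le \cV_{a,b,c}|f|(x)$ pointwise, it is enough to show that $\cV_{a,b,c}$ is bounded on $L^p(m\cdot\sigma_d)$. Realising $\cV_{a,b,c}$ as an integral operator against the measure $m\cdot\sigma_d$, its kernel is
\begin{align*}
H(x,y)=\frac{m(x)\,\sigma_a(x)\,|K_c(x,y)|\,\sigma_b(y)}{\sigma_d(y)},\qquad x,y\in T,
\end{align*}
and I would run Schur's test: it suffices to exhibit a strictly positive $\phi$ on $T$ and a constant $C$ with
\begin{align*}
\sum_{y\in T}H(x,y)\,\phi(y)^{p'}\,m(y)\sigma_d(y)\le C\,\phi(x)^{p'},\qquad \sum_{x\in T}H(x,y)\,\phi(x)^{p}\,m(x)\sigma_d(x)\le C\,\phi(y)^{p},
\end{align*}
for all $x,y\in T$, where $1/p+1/p'=1$. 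I would take $\phi(x)=m(x)^{\gamma}\rho^{\ell(x)}$ with $\gamma\ge 0$ and $\rho>0$ calibrated at the very end (the choice $\gamma=1$ is the one producing exactly the exponents appearing in the statement).

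Inserting the sharp estimate $|K_c(x,y)|\approx \sigma_c(x\wedge y)^{-1}m(x\wedge y)^{-1}$ from \eqref{estkc} and $\sigma_i(x)=k_i^{\ell(x)}$, the first Schur sum becomes, after cancelling $\sigma_d$,
\begin{align*}
m(x)\,k_a^{\ell(x)}\sum_{y\in T}\frac{k_c^{-\ell(x\wedge y)}}{m(x\wedge y)}\,(k_b\rho^{p'})^{\ell(y)}\,m(y)^{1+\gamma p'},
\end{align*}
and the second has an analogous structure, with the exponent $1+\gamma p'$ replaced by $2+\gamma p$, the exponential base $k_b\rho^{p'}$ replaced by $k_ak_d\rho^{p}$, and the roles of the two variables interchanged (up to easily tracked prefactors). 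To estimate such a sum I would split it according to the confluent: the $y$ with $x\wedge y=p^{n}(x)$ are exactly those in $T_{p^{n}(x)}\setminus T_{p^{n-1}(x)}$ (with $T_{p^{-1}(x)}:=\emptyset$), on which $k_c^{-\ell(x\wedge y)}/m(x\wedge y)$ is constant; bounding the inner sum by the sum over the whole sector $T_{p^{n}(x)}$ and applying Lemma \ref{lemexp2} (whose hypothesis $k>D_m^{q-1}$ turns into a lower bound on $\rho$) replaces the $n$-th block by a constant times $k_c^{-\ell(p^{n}(x))}m(p^{n}(x))^{\gamma p'}(k_b\rho^{p'})^{\ell(p^{n}(x))}$. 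Using $m(p^{n}(x))\le C_m^{n}m(x)$ to absorb the residual power of $m$, each Schur sum collapses to a geometric series in $n$, with ratio $C_m^{\gamma p'}k_b\rho^{p'}/k_c$ in the first case and $C_m^{1+\gamma p}k_ak_d\rho^{p}/k_c$ in the second.

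When the series are summed, in each case the surviving exponentials recombine into the single pointwise factor $m(x)\,(k_ak_b/k_c)^{\ell(x)}$, which is bounded precisely by condition 1) of the statement. It then remains to choose $\gamma\ge 0$ and $\rho>0$ so that, simultaneously, the two geometric ratios above are $<1$ and the two instances of the condition $k>D_m^{q-1}$ from Lemma \ref{lemexp2} hold. These four requirements amount to a lower and an upper bound on a power of $\rho$, whose four extremes are built from $k_a,k_b,k_c,k_d$ and from $C_m^{1+\gamma p}$, $D_m^{1+\gamma p}$, $C_m^{\gamma p'}$, $D_m^{\gamma p'}$; taking the relevant $p$- and $p'$-th roots with $\gamma=1$ produces exactly the factors $C_m^{2-1/p'}D_m$, $D_m^{2-1/p'}$, $C_m$, $D_m$, and the admissible window for $\rho$ is nonempty precisely because of the two inequalities in condition 2) of the statement (choosing instead a small $\gamma>0$ only relaxes these constraints). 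This yields $\|\cV_{a,b,c}\|_{L^p(m\cdot\sigma_d)\to L^p(m\cdot\sigma_d)}\le C$, hence the same bound for $\cU_{a,b,c}$.

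The main obstacle is the bookkeeping: carrying the powers of $m$, $C_m$, $D_m$, $k_a,k_b,k_c,k_d$ and $\rho$ through the confluent decomposition and the use of Lemma \ref{lemexp2} so that the constraints on $\rho$ reproduce the two inequalities of condition 2) on the nose; a further point to watch is that Lemma \ref{lemexp2} requires the summation exponent to be $\ge 1$, which is why $\gamma\ge 0$, and that in the second Schur sum the confluent decomposition of $\sum_x$ must also account for the vertices $x$ lying above $y$, which are the singleton parts $\{p^{j}(y)\}$ of the annuli $T_{p^{j}(y)}\setminus T_{p^{j-1}(y)}$.
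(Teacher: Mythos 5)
Your proposal is correct and follows essentially the same route as the paper: Schur's test with the test function $h(x)=k_\gamma^{\ell(x)}m(x)$ (your $\gamma=1$ case), the confluent decomposition $T=\bigcup_n\big(T_{p^n(\cdot)}\setminus T_{p^{n-1}(\cdot)}\big)$, Lemma \ref{lemexp2} to sum over sectors, and the observation that the two pairs of constraints on $k_\gamma$ admit a common solution exactly under condition 2) (which the paper isolates in Remark \ref{rem:final}). The extra flexibility of a general $\gamma\ge 0$ is a harmless variation, not a different argument.
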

\begin{proof}We shall apply Schur's Theorem. Fix $h(x)=k_\gamma^{ \ell(x)}m(x)$ for some $k_\gamma$ satisfying 
\begin{align} 
   \frac{D_m^{p'}}{k_b} < &k_\gamma^{p'}< \frac{k_c}{k_b C_m^{p'}}, \label{kga1}\\
    \frac{D_m^{p+1}}{k_ak_d} < &k_\gamma^{p}< \frac{k_c}{k_ak_d C_m^{p+1}} \label{kga2}.
\end{align} It is not hard to see that a $k_\gamma$ satisfying the above inequalities exists assuming \hyperref[item:2]{2)} (see also Remark \ref{rem:final}). Set $$H(x,y):=m(x)k_a^{\ell(x)}|K_c(x,y)|k_b^{\ell(y)}k_d^{-\ell(y)},$$ so that $\cV_{a,b,c} f(x)=\sum_{y \in T} H(x,y) f(y) m(y) k_d^{\ell(y)}$. We have that
\begin{align*}
    \sum_{y \in T} H(x,y) h(y)^{p'} m(y) k_d^{\ell(y)}&=\sum_{y \in T} k_a^{\ell(x)}m(x)|K_c(x,y)|k_b^{\ell(y)}k_\gamma^{p' \ell(y)}m(y)^{p'+1} \\ 
    &\approx m(x)k_a^{\ell(x)}\sum_{y \in T} \frac{k_b^{\ell(y)}k_\gamma^{p' \ell(y)}}{m(x\wedge y)k_c^{\ell(x\wedge y)}}m(y)^{p'+1}   \\ 
    &= m(x)k_a^{\ell(x)}\sum_{n=0}^\infty \sum_{y \in T_{p^n(x)}\setminus T_{p^{n-1}(x)}} \frac{k_b^{\ell(y)}k_\gamma^{p' \ell(y)}m(y)^{p'+1}}{m(p^n(x))k_c^{\ell(x)+n}} \\ 
    &\le  m(x)\bigg(\frac{k_a}{k_c}\bigg)^{\ell(x)} \sum_{n=0}^\infty \sum_{y \in T_{p^n(x)}} \frac{(k_bk_\gamma^{p'})^{\ell(y)}}{{m(p^n(x))k_c^n}}m(y)^{p'+1}\\ 
    &\le C  m(x)\bigg(\frac{k_a k_b k_\gamma^{p'}}{k_c}\bigg)^{\ell(x)} \sum_{n=0}^\infty \frac{k_b^{n}k_\gamma^{np'}m(p^n(x))^{p'}}{k_c^n} \\ 
    &\le C \bigg(\frac{k_a k_b k_\gamma^{p'}}{k_c}\bigg)^{\ell(x)} m(x)^{p'+1}\\ 
    &\le C k_\gamma^{p' \ell(x)} m(x)^{p'} \\ 
    &=C h(x)^{p'},
\end{align*} where we have used Lemma \ref{lemexp2} and \eqref{kga1} in the third-to-last inequality, and again \eqref{kga1} and 
$$m(p^n(x))^{p'} \le C_m^{np'}m(x)^{p'}$$  in the penultimate inequality. Finally, the last inequality follows from \hyperref[item:1]{1)}.
 Moreover, 
\begin{align*}
    \sum_{x \in T} H(x,y) h(x)^{p} k_d^{\ell(x)}m(x) &\le \sum_{x \in T} k_a^{\ell(x)}|K_c(x, y)| \bigg(\frac{k_b}{k_d}\bigg)^{\ell(y)}k_{\gamma}^{p\ell(x)} k_d^{\ell(x)}m(x)^{p+2} \\ 
    &=\bigg(\frac{k_b}{k_d}\bigg)^{\ell(y)}\sum_{x \in T} \frac{k_a^{\ell(x)}}{k_c^{\ell(x\wedge y)}m(x\wedge y)}k_\gamma^{p\ell(x)}k_d^{\ell(x)}m(x)^{p+2} \\ 
        &=\bigg(\frac{k_b}{k_d}\bigg)^{\ell(y)}\sum_{n=0}^\infty\sum_{x \in T_{p^n(y)}\setminus T_{p^{n-1}(y)}}\!\!\!\frac{k_a^{\ell(x)}k_\gamma^{p\ell(x)}k_d^{\ell(x)}}{k_c^{\ell(y)+n}m(p^n(y))}m(x)^{p+2}\\ 
        &\le \bigg(\frac{k_b}{k_dk_c}\bigg)^{\ell(y)}\sum_{n=0}^\infty\frac{1}{k_c^n m(p^n(y))}\!\sum_{x \in T_{p^n(y)}}\!\!\!\!\!(k_ak_\gamma^{p}k_d)^{\ell(x)}m(x)^{p+2} \\ 
        &\le C \bigg(\frac{k_ak_bk_\gamma^{p}}{k_c }\bigg)^{\ell(y)}\sum_{n=0}^\infty\frac{1}{k_c^n}k_a^{n}k_\gamma^{pn}k_d^{n}m(p^n(y))^{p+1} \\ 
        &\le C\bigg(\frac{k_ak_bk_\gamma^{p}}{k_c }\bigg)^{\ell(y)}m(y)^{p+1},
\end{align*}where the second to last inequality follows by using Lemma \ref{lemexp2} and that ${k_ak_\gamma^{p}k_d>D_m^{p+1}}$ and the last inequality follows by
$$k_ak_\gamma^{p}k_dC_m^{p+1}<k_c.$$ We conclude by observing that \hyperref[item:1]{1)} implies that for every $y \in T$ $$m(y) \bigg(\frac{k_ak_b}{k_c}\bigg)^{\ell(y)} \le C,$$ so that $$\sum_{x \in T} H(x,y) h(x)^{p} k_d^{\ell(x)}m(x) \le C h(y)^p.$$ 
\end{proof}

The case $p=1$ can be obtained by a similar and easier analysis. 
\begin{proposition}\label{PROP:P=1}
    Assume that $m(x) \le C \big(k_c/(k_ak_b)\big)^{\ell(x)}$ for every $x \in T$ and that $D_m<k_a k_d<k_c/C_m $. Then, $\cV_{a,b,c}$ is bounded on $L^1(m \cdot \sigma_d).$
\end{proposition}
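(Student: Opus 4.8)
The plan is to use the elementary duality characterisation of $L^1$-boundedness for positive integral operators. Writing
$$\cV_{a,b,c}f(x)=\sum_{y\in T}H(x,y)\,f(y)\,m(y)k_d^{\ell(y)},\qquad H(x,y):=m(x)k_a^{\ell(x)}\,|K_c(x,y)|\,k_b^{\ell(y)}k_d^{-\ell(y)},$$
and noting that $H\ge 0$, Tonelli's theorem gives $\|\cV_{a,b,c}f\|_{L^1(m\cdot\sigma_d)}\le\sum_{y}|f(y)|\,m(y)k_d^{\ell(y)}\big(\sum_{x}H(x,y)\,k_d^{\ell(x)}m(x)\big)$, so it suffices to prove that the column sums $\sup_{y\in T}\sum_{x\in T}H(x,y)\,k_d^{\ell(x)}m(x)$ are finite. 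This is the $p=1$ degeneration of the first Schur estimate in the proof of Proposition \ref{prop:suff}: the second Schur inequality there becomes vacuous, and no auxiliary weight $k_\gamma$ is needed.

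First I would substitute the sharp kernel estimate $|K_c(x,y)|\approx\big(k_c^{\ell(x\wedge y)}m(x\wedge y)\big)^{-1}$ from \eqref{estkc}, reducing the column sum to $\big(k_b/k_d\big)^{\ell(y)}\sum_{x\in T}(k_ak_d)^{\ell(x)}m(x)^2\,k_c^{-\ell(x\wedge y)}\,m(x\wedge y)^{-1}$. Then, exactly as in Proposition \ref{prop:suff}, I would split $T=\bigcup_{n\ge0}\big(T_{p^n(y)}\setminus T_{p^{n-1}(y)}\big)$ with $T_{p^{-1}(y)}:=\emptyset$, so that $x\wedge y=p^n(y)$ on the $n$-th piece, and bound each piece by the full sector $T_{p^n(y)}$. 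On each sector I would apply Lemma \ref{lemexp2} with exponent $p=2$ and base $k=k_ak_d$; this is licit precisely because $k_ak_d>D_m$ (the threshold of Lemma \ref{lemexp2} for $p=2$), which is the left half of the hypothesis. This gives $\sum_{x\in T_{p^n(y)}}(k_ak_d)^{\ell(x)}m(x)^2\approx(k_ak_d)^{\ell(y)+n}m(p^n(y))^2$, so the column sum is $\lesssim\big(\tfrac{k_ak_b}{k_c}\big)^{\ell(y)}m(y)\sum_{n\ge0}\big(\tfrac{k_ak_d}{k_c}\big)^{n}m(p^n(y))$.

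To finish I would use $m(p^n(y))\le C_m^{\,n}m(y)$ from \eqref{locdoub}; the geometric series $\sum_{n\ge0}\big(k_ak_dC_m/k_c\big)^{n}$ then converges exactly because $k_ak_d<k_c/C_m$, the right half of the hypothesis, leaving $\big(\tfrac{k_ak_b}{k_c}\big)^{\ell(y)}m(y)$, which is $\le C$ by the assumption $m(y)\le C\big(k_c/(k_ak_b)\big)^{\ell(y)}$. Hence the column sums are uniformly bounded and $\cV_{a,b,c}$ is bounded on $L^1(m\cdot\sigma_d)$. I do not expect a genuine obstacle here: the computation is a strict simplification of the Schur argument already carried out for $p\in(1,\infty)$, and the only points worth stating carefully are where the two inequalities $D_m<k_ak_d<k_c/C_m$ enter (applicability of Lemma \ref{lemexp2}, and convergence of the geometric series) and the fact that at $p=1$ a single column-sum bound replaces the pair of Schur conditions.
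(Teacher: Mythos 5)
Your proposal is correct and follows essentially the same route as the paper: the paper likewise reduces matters to the uniform column-sum bound $\sup_{z}\big(\tfrac{k_b}{k_d}\big)^{\ell(z)}\sum_{x}m(x)^2(k_ak_d)^{\ell(x)}|K_c(x,z)|<\infty$, substitutes the sharp estimate \eqref{estkc}, decomposes over $T_{p^n(z)}\setminus T_{p^{n-1}(z)}$, applies Lemma \ref{lemexp2}, and closes with $m(p^n(z))\le C_m^n m(z)$ and the geometric series. Your identification of exactly where the two halves of $D_m<k_ak_d<k_c/C_m$ enter matches the paper's computation.
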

\begin{proof}
    It is clear that $$\|\cV_{a,b,c}\|_{L^1(m\cdot \sigma_d);L^1(m\cdot \sigma_d)}\le \sup_{z \in T}\bigg(\frac{k_b}{k_d}\bigg)^{\ell(z)} \sum_{x \in T}m(x)k_a^{\ell(x)}|K_c(x,z)|m(x)\sigma_d(x)=:I.$$
    By \eqref{estkc}
    \begin{align*}
        I &\approx \sup_{z \in T} \bigg(\frac{k_b}{k_d}\bigg)^{\ell(z)} \sum_{x \in T}\frac{m^2(x)(k_ak_d)^{\ell(x)}}{k_c^{\ell(x \wedge z)}m(x \wedge z)} \\ 
        &=\sup_{z \in T} \bigg(\frac{k_b}{k_dk_c}\bigg)^{\ell(z)} \sum_{n=0}^{\infty}\frac{1}{k_c^n{m(p^n(z))} }\sum_{x \in T_{p^{n}(z)} \setminus T_{p^{n-1}(z)}} m(x)^2(k_ak_d)^{\ell(x)}.
    \end{align*} An application of Lemma \ref{lemexp2} now yields
    \begin{align*}
     I &\le C \sup_{z \in T} \bigg(\frac{k_bk_a}{k_c}\bigg)^{\ell(z)} \sum_{n=0}^\infty \frac{1}{k_c^n} m(p^n(z)) (k_a k_d)^n   \\ 
     &\le C\sup_{z \in T} m(z)\bigg(\frac{k_bk_a}{k_c}\bigg)^{\ell(z)} \sum_{n=0}^\infty   \bigg(C_m\frac{k_a k_d}{k_c}\bigg)^n \\ 
     &\le C.
    \end{align*}
\end{proof} 
\begin{remark}\label{rem:final}

 It is important to verify that the two inequalities \eqref{kga1} and \eqref{kga2} in Proposition \ref{prop:suff} can be satisfied. We can rewrite them as
\begin{align*}
     \frac{D_m}{k_b^{1/p}} < &k_\gamma< \left(\frac{k_c}{k_b}\right)^{1/p}\frac{1}{C_m}, \\
    \frac{D_m^{1+1/p'}}{(k_ak_d)^{1/p'}} < &k_\gamma< \left(\frac{k_c}{k_ak_d}\right)^{1/p'}\frac{1}{C_m^{1+1/p'}}.
\end{align*}
Such $k_\gamma$ exists if the two intervals defined by the two pairs of conditions above intersect, namely if
\begin{align*}
  &\frac{D_m^{2-1/p'}}{(k_ak_d)^{1/p}}<  \left(\frac{k_c}{k_b}\right)^{1/p'}\frac{1}{C_m}\\
  &\frac{D_m}{k_b^{1/p'}} <  \left(\frac{k_c}{k_ak_d}\right)^{1/p}\frac{1}{C_m^{2-1/p'}},
\end{align*}
where we substituted $1/p'=1-1/p$. By solving the inequalities for $k_d$ we obtain
\begin{align}
    k_d^{1/p}&>\frac{C_mD_m^{2-1/p'}k_b^{1/p'}}{k_a^{1/p}k_c^{1/p'}}\label{kd1}\\
    k_d^{1/p}&<\frac{k_b^{1/p'}k_c^{1/p}}{C_m^{2-1/p'}D_m k_a^{1/p}},\label{kd2}
\end{align}  which is exactly \hyperref[item:2]{2)}.
It is then straightforward to conclude that $k_d$ exists if and only if
\begin{equation}\label{intersection-not-empty}
    k_c> (C_mD_m)^{3-1/p'}.
\end{equation}
\end{remark}
Finally, we prove that for homogeneous trees equipped with the canonical flow measure, necessary and sufficient conditions for the $L^p$ boundedness of $\cU_{a,b,c}$ and $\cV_{a,b,c}$ coincide.
\begin{corollary}\label{equivalence-Toeplitz}
Let $T_{q+1}$ be the homogeneous tree of order $q+1$ and let $m(\cdot)=q^{\ell(\cdot)}$ be the canonical flow measure on $T_{q+1}$. Set $\sigma_j(\cdot)=k_j^{\ell(\cdot)}=q^{(j-1)\ell(\cdot)}$  for $j \in \{a,b,c,d\}$, with $c>1$ and let $p \in [1,\infty)$. The following are equivalent: 
\begin{itemize}
    \item[i')] $\cU_{a,b,c}$ is bounded on $L^{p}(m \cdot \sigma_d)$;
    \item[ii')] $\cV_{a,b,c}$ is bounded on $L^{p}(m \cdot \sigma_d)$;
    \item[iii')] $-a<\frac{d-1}{p}<b-1$ and  $c+1=a+b$. 
\end{itemize}

\end{corollary}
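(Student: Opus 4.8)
The plan is to prove the three statements equivalent by closing the cyclic chain of implications i') $\Rightarrow$ iii') $\Rightarrow$ ii') $\Rightarrow$ i'). The implication ii') $\Rightarrow$ i') is immediate: from the triangle inequality and the definitions of the two operators one has $|\cU_{a,b,c}f(x)|\le \cV_{a,b,c}(|f|)(x)$ for every $f$ and every vertex $x$, so any $L^p(m\cdot\sigma_d)$ bound for $\cV_{a,b,c}$ is inherited by $\cU_{a,b,c}$. The reason the necessary and sufficient conditions actually meet in this special case — and not on a general locally doubling tree — is that for the canonical flow measure on $T_{q+1}$ one has $m(\cdot)=q^{\ell(\cdot)}$, hence $C_m=q$ and $D_m=1/q$, so $C_mD_m=1$, and the kernel estimate \eqref{estkc} is genuinely two-sided; these are precisely the features that make the sharp Schur bound below align with the lower bounds extracted from test functions.

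For i') $\Rightarrow$ iii') I would simply specialise Proposition~\ref{prop:nec}. Since $m(\cdot)=q^{\ell(\cdot)}$ we have $C_m=q$, $k_j=q^{j-1}$ for $j\in\{a,b,c,d\}$, and $\ell$ is surjective onto $\mathbb Z$. Therefore the inequality in condition \hyperref[itm:one]{i)} of Proposition~\ref{prop:nec} can hold at every vertex only if the two sides have equal exponent, which forces the linear relation between $a,b,c$ recorded in iii'); and condition \hyperref[itm:two]{ii)}, namely $1/(k_aC_m)<k_d^{1/p}<k_b$, rewrites as $q^{-a}<q^{(d-1)/p}<q^{b-1}$, i.e. $-a<(d-1)/p<b-1$. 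Together these are exactly iii').

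The heart of the matter is iii') $\Rightarrow$ ii'). I would \emph{not} deduce it from Proposition~\ref{prop:suff}: its hypothesis \hyperref[item:2]{2)}, obtained in Remark~\ref{rem:final} by intersecting two $k_\gamma$-intervals with the generic constants $C_m,D_m$, is strictly more restrictive than iii') even when $C_mD_m=1$, and so it cannot serve as a converse to Proposition~\ref{prop:nec}. Instead I would rerun Schur's test directly on $T_{q+1}$, exploiting that the two-sided estimate \eqref{estkc} here reads $|K_c(x,z)|\approx q^{-c\,\ell(x\wedge z)}$. Writing, as in the proof of Proposition~\ref{prop:suff}, $\cV_{a,b,c}f(x)=\sum_y H(x,y)f(y)\,m(y)\sigma_d(y)$ with $H(x,y)\approx m(x)\sigma_a(x)\,q^{-c\,\ell(x\wedge y)}\,\sigma_b(y)\sigma_d(y)^{-1}$, I would apply Schur's test with the weight $h(x)=q^{s\ell(x)}$ for a real parameter $s$ to be chosen. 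Since every vertex of $T_{q+1}$ has exactly $q$ successors, $\sum_{x\in T_v}q^{t\ell(x)}\approx q^{t\ell(v)}$ whenever $t>1$; splitting the sums according to the level of the confluent $x\wedge y$ then turns both Schur integrals into plain geometric series, and a short computation shows that they converge and reproduce $h$ up to a constant precisely when two quantities affine in $s$ (built from $a,b,d,p$) both lie strictly between $1$ and $c$. Matching homogeneity exponents forces the same linear relation among $a,b,c$ as in iii'), and under that relation the two open conditions on $s$ admit a common solution if and only if $-a<(d-1)/p<b-1$. Choosing such an $s$ and invoking Schur's lemma yields $L^p(m\cdot\sigma_d)$-boundedness of $\cV_{a,b,c}$ for $p\in(1,\infty)$; the endpoint $p=1$ collapses to checking $\sup_y\sum_x H(x,y)\,m(x)\sigma_d(x)<\infty$, again a single geometric series, or may be read off from Proposition~\ref{PROP:P=1}, whose hypotheses are already sharp on the canonical homogeneous tree.

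The principal obstacle is exactly this sufficiency step: one cannot afford the lossy interval-intersection behind Proposition~\ref{prop:suff}, but must instead carry out the Schur estimate with the exact branching number $q$ of $T_{q+1}$ and the \emph{two-sided} kernel asymptotics \eqref{estkc}, so that the range of admissible Schur exponents $s$ is nonempty if and only if iii') holds. Everything else is routine bookkeeping with geometric series; the only point requiring care is verifying that the conjunction of the several exponent inequalities that arise collapses exactly to the pair $-a<(d-1)/p<b-1$ together with the linear relation between $a,b,c$ dictated by Proposition~\ref{prop:nec}.
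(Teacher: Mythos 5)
Your overall architecture (ii')\,$\Rightarrow$\,i') trivially, i')\,$\Rightarrow$\,iii') via Proposition~\ref{prop:nec}, iii')\,$\Rightarrow$\,ii') via a Schur test) is sound, and the first two implications are handled exactly as in the paper. The problem is the central claim on which you base the third implication: you assert that hypothesis \hyperref[item:2]{2)} of Proposition~\ref{prop:suff} ``is strictly more restrictive than iii') even when $C_mD_m=1$,'' and you therefore discard the paper's route and replace it with a from-scratch Schur computation that you only sketch. That assertion is false, and checking it is precisely the content of the paper's proof. On $T_{q+1}$ with the canonical flow measure one has $C_m=q$ and $D_m=1/q$, and condition \hyperref[itm:one]{i)} forces $k_c=qk_ak_b$. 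Substituting, the lower bound in \hyperref[item:2]{2)} becomes
\begin{align*}
\frac{C_mD_m^{2-1/p'}k_b^{1/p'}}{k_a^{1/p}k_c^{1/p'}}
=\frac{q^{-1/p}\,k_b^{1/p'}}{k_a^{1/p}\,q^{1/p'}k_a^{1/p'}k_b^{1/p'}}
=\frac{q^{-1/p-1/p'}}{k_a}=\frac{1}{qk_a}=\frac{1}{C_mk_a},
\end{align*}
and the upper bound becomes
\begin{align*}
\frac{k_b^{1/p'}k_c^{1/p}}{C_m^{2-1/p'}D_mk_a^{1/p}}
=\frac{k_b^{1/p'}\,q^{1/p}k_a^{1/p}k_b^{1/p}}{q^{1/p}k_a^{1/p}}=k_b ,
\end{align*}
so \hyperref[item:2]{2)} collapses exactly to $\tfrac{1}{C_mk_a}<k_d^{1/p}<k_b$, which is condition \hyperref[itm:two]{ii)} of Proposition~\ref{prop:nec}, i.e.\ the inequality $-a<\tfrac{d-1}{p}<b-1$ of iii'); the auxiliary constraint \eqref{intersection-not-empty} reads $k_c>1$ and is guaranteed by $c>1$. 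The same matching occurs for $p=1$ with Proposition~\ref{PROP:P=1}. So the ``principal obstacle'' you identify does not exist: iii')\,$\Rightarrow$\,ii') follows by direct citation of Propositions~\ref{prop:suff} and~\ref{PROP:P=1}.

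Your proposed replacement --- rerunning Schur's test on $T_{q+1}$ with the two-sided estimate \eqref{estkc} and weight $h(x)=q^{s\ell(x)}$ --- would in fact succeed (it amounts to re-deriving Proposition~\ref{prop:suff} in the special case $C_m=1/D_m=q$), but as written it is only a sketch: the ``short computation'' producing the two affine conditions on $s$ and the verification that their compatibility is exactly $-a<\tfrac{d-1}{p}<b-1$ together with the linear relation on $a,b,c$ are precisely the steps that would need to be written out, and they are the same bookkeeping already done in Proposition~\ref{prop:suff} and Remark~\ref{rem:final}. As it stands, the proof is incomplete at its key step, and the stated justification for taking the longer route is incorrect.
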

\begin{proof}
    The inequality $m(\cdot) \le C \bigg(\frac{k_c}{k_ak_b}\bigg)^{\ell(\cdot)}$ on $T_{q+1}$ holds if and only if $\frac{k_c}{k_ak_b}=q$, that is the equality in iii'). Since in this case $C_m=q=1/D_m$, it is not hard to see that \hyperref[itm:two]{ii)} and \hyperref[item:2]{2)} (and the corresponding inequality when $p=1$ in Proposition \ref{PROP:P=1}) are equivalent to the inequalities in iii').
\end{proof}

 \bibliographystyle{plain}
{\small
\bibliography{references}}
\end{document}